\numberwithin{equation}{section}
\numberwithin{figure}{section}
\theoremstyle{plain}
\newtheorem{thm}{\protect\theoremname}
\theoremstyle{remark}
\newtheorem*{rem*}{\protect\remarkname}
\theoremstyle{definition}
\newtheorem{defn}{\protect\definitionname}
\theoremstyle{plain}
\newtheorem{assumption}{\protect\assumptionname}
\theoremstyle{plain}
\newtheorem{lem}{\protect\lemmaname}
\theoremstyle{remark}
\newtheorem*{acknowledgement*}{\protect\acknowledgementname}
\setlist{nosep}
\providecommand{\acknowledgementname}{Acknowledgement}
\providecommand{\assumptionname}{Assumption}
\providecommand{\definitionname}{Definition}
\providecommand{\lemmaname}{Lemma}
\providecommand{\remarkname}{Remark}
\providecommand{\theoremname}{Theorem}
\begin{document}
\newcommand{\R}{\mathbb{R}}
\title{Nonlocal Fully Nonlinear Double Obstacle Problems}
\author{Mohammad Safdari$\,{}^{1}$}
\begin{abstract}
We prove the existence and $C^{1,\alpha}$ regularity of solutions
to nonlocal fully nonlinear elliptic double obstacle problems. We
also obtain boundary regularity for these problems. The obstacles
are assumed to be Lipschitz semi-concave/semi-convex functions, and
we do not require them to be $C^{1}$. Our approach is to adapt a
penalization method to be applicable to the setting of nonlocal equations
and their viscosity solutions.\medskip{}

\noindent \textsc{Mathematics Subject Classification.} 35R35, 47G20,
35B65.\thanks{$^{1}\;$Department of Mathematical Sciences, Sharif University of
Technology, Tehran, Iran\protect \\
Email address: safdari@sharif.edu}
\end{abstract}

\maketitle

\section{Introduction}

In this paper we consider the existence and regularity of solutions
to the double obstacle problem 
\begin{equation}
\begin{cases}
\max\{\min\{-Iu-f,u-\psi^{-}\},u-\psi^{+}\}=0 & \textrm{in }U,\\
u=\varphi & \textrm{in }\mathbb{R}^{n}-U.
\end{cases}\label{eq: dbl obs}
\end{equation}
Here $I$ is a nonlocal elliptic operator, of which a prototypical
example is the fractional Laplacian 
\[
-(-\Delta)^{s}u(x)=c_{n,s}\int_{\mathbb{R}^{n}}\frac{u(x+y)+u(x-y)-2u(x)}{|y|^{n+2s}}\,dy.
\]
Nonlocal operators appear naturally in the study of discontinuous
stochastic processes as the jump part of their infinitesimal generator.
These operators have also been studied extensively in recent years
from the analytic viewpoint of integro-differential equations. The
foundational works of \citet{caffarelli2009regularity,caffarelli2011evans,caffarelli2011regularity}
paved the way and set the framework for such studies. They provided
an appropriate notion of ellipticity for nonlinear nonlocal equations,
and obtained their $C^{1,\alpha}$ regularity. They also obtained
Evans-Krylov-type $C^{2s+\alpha}$ regularity for convex equations.
An interesting property of their estimates is their uniformity as
$s\uparrow1$, which provides a new proof for the corresponding classical
estimates for local equations.

Free boundary problems involving nonlocal operators have also seen
many advancements. \citet{silvestre2007regularity} obtained $C^{1,\alpha}$
regularity of the obstacle problem for fractional Laplacian. \citet{caffarelli2008regularity}
proved the optimal $C^{1,s}$ regularity for this problem when the
obstacle is smooth enough. \citet*{bjorland2012nonlocal} studied
a double obstacle problem for the infinity fractional Laplacian which
appear in the study of a nonlocal version of the tug-of-war game.
\citet{korvenpaa2016obstacle} studied the obstacle problem for operators
of fractional $p$-Laplacian type. \citet{petrosyan2015optimal} considered
the obstacle problem for the fractional Laplacian with drift in the
subcritical regime $s\in(\frac{1}{2},1)$, and \citet{fernandez2018obstacle}
studied the critical case $s=\frac{1}{2}$. There has also been some
works on other types of nonlocal free boundary problems, like the
work of \citet{rodrigues2019nonlocal} on nonlocal linear variational
inequalities with constraint on the fractional gradient.

A major breakthrough in the study of nonlocal free boundary problems
came with the work of \citet{caffarelli2017obstacle}, in which they
obtained the regularity of the solution and of the free boundary of
the obstacle problem for a large class of nonlocal elliptic operators.
These problems appear naturally when considering optimal stopping
problems for Lévy processes with jumps, which arise for example as
option pricing models in mathematical finance. We should mention that
in their work, the boundary regularity results of \citet{ros2016boundary,ros2017boundary}
for nonlocal elliptic equations were also essential.

In this paper we prove $C^{1,\alpha}$ regularity of the double obstacle
problem (\ref{eq: dbl obs}) for a large class of nonlocal fully nonlinear
operators $I$. In contrast to \citep{caffarelli2017obstacle}, we
do not require the operator to be convex. We also allow less smooth
obstacles, and do not require them to be $C^{1}$. And in addition
to the interior regularity, we obtain the boundary regularity too.
Our estimates in this work are uniform as $s\uparrow1$; hence they
provide a new proof for the corresponding regularity results for local
double obstacle problems. We use these regularity results in an upcoming
work to study nonlocal equations with constraint on the (classical)
gradient. A particular consequence of those results is that the regularity
for nonlocal double obstacle problems can be proved for even less
smooth obstacles.

Let us also briefly mention some of the works on the (double) obstacle
problem for local equations, and local equations with gradient constraints.
The study of elliptic equations with gradient constraints was initiated
by \citet{MR529814} when he considered the problem 
\[
\max\{Lu-f,\;|Du|-g\}=0,
\]
where $L$ is a (local) linear elliptic operator. Equations of this
type stem from dynamic programming in a wide class of singular stochastic
control problems. Recently, there has been new interest in these types
of problems. \citet{Hynd} studied (local) fully nonlinear elliptic
equations with strictly convex gradient constraints of the form 
\[
\max\{F(x,D^{2}u)-f,\;H(Du)\}=0.
\]
Closely related to these problems are variational problems with gradient
constraints. An important example among them is the well-known elastic-plastic
torsion problem, which is the problem of minimizing the functional
$\int_{U}\frac{1}{2}|Dv|^{2}-v\,dx$ over the set 
\[
W_{B_{1}}:=\{v\in W_{0}^{1,2}(U):|Dv|\le1\textrm{ a.e.}\}.
\]
An interesting property of variational problems with gradient constraints
is that under mild conditions they are equivalent to double obstacle
problems. For example the minimizer of $\int_{U}G(Dv)\,dx$ over $W_{B_{1}}$,
also satisfies $-d\le v\le d$ and 
\[
\begin{cases}
-D_{i}(D_{i}G(Dv))=0 & \textrm{ in }\{-d<v<d\},\\
-D_{i}(D_{i}G(Dv))\le0 & \textrm{ a.e. on }\{v=d\},\\
-D_{i}(D_{i}G(Dv))\ge0 & \textrm{ a.e. on }\{v=-d\},
\end{cases}
\]
where $d$ is the distance to $\partial U$; see for example \citep{MR1}.
This problem can be more compactly written as 
\[
\max\{\min\{F(x,D^{2}v),v+d\},v-d\}=0,
\]
where $F(x,D^{2}v)=-D_{i}(D_{i}G(Dv))=-D_{ij}^{2}G(x)D_{ij}^{2}v$.

Variational problems with gradient constraints have also seen new
developments in recent years. \citet{MR2605868} investigated the
minimizers of some functionals subject to gradient constraints, arising
in the study of random surfaces. In their work, the functional is
allowed to have certain kinds of singularities. Also, the constraints
are given by convex polygons; so they are not strictly convex. In
\citep{SAFDARI202176} we have studied variational problems with non-strictly
convex gradient constraints, and we obtained their optimal $C^{1,1}$
regularity. This has been partly motivated by the above-mentioned
problem about random surfaces. There has also been similar interests
in elliptic equations with gradient constraints which are not strictly
convex. These problems emerge in the study of some singular stochastic
control problems appearing in financial models with transaction costs;
see for example \citep{barles1998option,possamai2015homogenization}.
In \citep{SAFDARI2021358} we extended the results of \citep{Hynd}
and proved the optimal regularity for (local) fully nonlinear elliptic
equations with non-strictly convex gradient constraints. Our approach
was to obtain a link between double obstacle problems and elliptic
equations with gradient constraints. This link has been well known
in the case where the double obstacle problem reduces to an obstacle
problem. However, we have shown that there is still a connection between
the two problems in the general case. In this approach, we also studied
(local) fully nonlinear double obstacle problems with singular obstacles.
Finally, let us also mention the recent works \citep{MR2989443,lee2019regularity}
on (local) double obstacle problems.

Now let us introduce the problem in more detail. First we recall some
of the definitions and conventions about nonlocal operators introduced
in \citep{caffarelli2009regularity}. Let 
\[
\delta u(x,y):=u(x+y)+u(x-y)-2u(x).
\]
A linear nonlocal operator is an operator of the form 
\[
Lu(x)=\int_{\mathbb{R}^{n}}\delta u(x,y)K(y)\,dy,
\]
where the kernel $K$ is a positive function which satisfies $K(-y)=K(y)$,
and 
\[
\int_{\mathbb{R}^{n}}\min\{1,|y|^{2}\}K(y)\,dy<\infty.
\]

We say a function $u$ belongs to $C^{1,1}(x)$ if there are quadratic
polynomials $P,Q$ such that $P(x)=u(x)=Q(x)$, and $P\le u\le Q$
on a neighborhood of $x$. A nonlocal operator $I$ is an operator
for which $Iu(x)$ is well-defined for bounded functions $u\in C^{1,1}(x)$,
and $Iu(\cdot)$ is a continuous function on an open set if $u$ is
$C^{2}$ over that open set. The operator $I$ is \textit{uniformly
elliptic} with respect to a family of linear operators $\mathcal{L}$
if for any bounded functions $u,v\in C^{1,1}(x)$ we have 
\[
M_{\mathcal{L}}^{-}(u-v)(x)\le Iu(x)-Iv(x)\le M_{\mathcal{L}}^{+}(u-v)(x),
\]
where the extremal Pucci-type operators $M_{\mathcal{L}}^{\pm}$
are defined as 
\[
M_{\mathcal{L}}^{-}u(x)=\inf_{L\in\mathcal{L}}Lu(x),\qquad\qquad M_{\mathcal{L}}^{+}u(x)=\sup_{L\in\mathcal{L}}Lu(x).
\]
Let us also note that $\pm M_{\mathcal{L}}^{\pm}$ are subadditive. 

An important family of linear operators is the class $\mathcal{L}_{0}$
of linear operators whose kernels are comparable with the kernel of
fractional Laplacian $-(-\Delta)^{s}$, i.e. 
\[
(1-s)\frac{\lambda}{|y|^{n+2s}}\le K(y)\le(1-s)\frac{\Lambda}{|y|^{n+2s}},
\]
where $0<s<1$ and $0<\lambda\le\Lambda$. It can be shown that in
this case the extremal operators $M_{\mathcal{L}_{0}}^{\pm}$, which
we will simply denote by $M^{\pm}$, are given by 
\begin{align*}
 & M^{+}u=(1-s)\int_{\mathbb{R}^{n}}\frac{\Lambda\delta u(x,y)^{+}-\lambda\delta u(x,y)^{-}}{|y|^{n+2s}}\,dy,\\
 & M^{-}u=(1-s)\int_{\mathbb{R}^{n}}\frac{\lambda\delta u(x,y)^{+}-\Lambda\delta u(x,y)^{-}}{|y|^{n+2s}}\,dy,
\end{align*}
where $r^{\pm}=\max\{\pm r,0\}$ for a real number $r$. Another important
class is $\mathcal{L}_{*}\subset\mathcal{L}_{0}$ which consists of
operators with homogeneous kernel, i.e. 
\[
K(y)=\frac{a(y/|y|)}{|y|^{n+2s}}\qquad\textrm{ with }\qquad(1-s)\lambda\le a(\cdot)\le(1-s)\Lambda.
\]
The class $\mathcal{L}_{*}$ consists of all infinitesimal generators
of stable Lévy processes belonging to $\mathcal{L}_{0}$, and appears
in the study of boundary regularity of nonlocal equations (see \citep{ros2016boundary,ros2017boundary}
for more details). We will denote the extremal operators $M_{\mathcal{L}_{*}}^{\pm}$
simply by $M_{*}^{\pm}$. Note that we have 
\[
M^{-}u\le M_{*}^{-}u\le M_{*}^{+}u\le M^{+}u.
\]

We will also only consider ``constant coefficient'' nonlocal operators,
i.e. we assume that $I$ is translation invariant: 
\[
I(\tau_{z}u)=\tau_{z}(Iu)
\]
for every $z$, where $\tau_{z}u(x):=u(x-z)$ is the translation operator.
In addition, without loss of generality we can assume that $I(0)=0$,
i.e. the action of $I$ on the constant function 0 is 0. Because by
translation invariance $I(0)$ is constant, and we can consider $I-I(0)$
instead of $I$.

Now let us state our main results. We denote the distance to $\partial U$
by $d(\cdot)=d(\cdot,\partial U)$.
\begin{thm}
\label{thm: Reg u}Suppose $I$ is a translation invariant operator
which is uniformly elliptic with respect to $\mathcal{L}_{0}$, and
$0<s_{0}<s<1$. Also, suppose $U,\varphi,\psi^{\pm}$ satisfy Assumption
\ref{assu: =00005Cpsi +-}, and $f:\R^{n}\to\R$ is a continuous function.
Then the double obstacle problem (\ref{eq: dbl obs}) has a viscosity
solution $u$, and 
\[
u\in C_{\mathrm{loc}}^{1,\alpha}(U)
\]
for some $\alpha>0$ depending only on $n,\lambda,\Lambda,s_{0}$.
And for an open subset $V\subset\subset U$ we have 
\[
\|u\|_{C^{1,\alpha}(\overline{V})}\le C(\|u\|_{L^{\infty}(\mathbb{R}^{n})}+\|f\|_{L^{\infty}(U)}+C_{V}),
\]
where $C$ depends only on $n,\lambda,\Lambda,s_{0}$, and $d(V,\partial U)$;
and $C_{V}$ depends only on these constants together with $\|\psi^{\pm}\|_{L^{\infty}(\mathbb{R}^{n})}$
and the semi-concavity norms of $\pm\psi^{\pm}$ on $V$.
\end{thm}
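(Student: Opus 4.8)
The plan is to construct the solution via a penalization method adapted to the nonlocal viscosity-solution setting, following the classical template for (double) obstacle problems but replacing second-order tools with the integro-differential machinery of \citep{caffarelli2009regularity,caffarelli2011regularity}. First I would introduce a family of smooth penalty functions $\beta_\varepsilon$ with $\beta_\varepsilon(t)=0$ for $t\le 0$, $\beta_\varepsilon$ convex and increasing, $\beta_\varepsilon(t)\to+\infty$ as $\varepsilon\to 0$ for $t>0$, and a companion penalty $\gamma_\varepsilon$ penalizing the lower obstacle from the other side, and solve the penalized equation
\begin{equation}
\begin{cases}
-Iu_\varepsilon - f + \beta_\varepsilon(u_\varepsilon-\psi^+) - \gamma_\varepsilon(\psi^- - u_\varepsilon) = 0 & \textrm{in } U,\\
u_\varepsilon = \varphi & \textrm{in }\R^n - U,
\end{cases}\label{eq: penalized}
\end{equation}
whose solvability follows from Perron's method together with the comparison principle for nonlocal equations, once suitable barriers are built from $\psi^\pm$ and the $L^\infty$ data (this is where Assumption \ref{assu: =00005Cpsi +-} on $U,\varphi,\psi^\pm$ enters). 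The semi-concavity of $\psi^+$ and semi-convexity of $\psi^-$ guarantee that translates/inf-convolutions of the obstacles can be touched from the appropriate side, so that $\psi^+$ and $\psi^-$ themselves serve (after the standard modification away from $U$) as super/sub-solutions of the penalized problem, yielding the uniform bound $\|u_\varepsilon\|_{L^\infty(\R^n)} \le C(\|u\|_{L^\infty}+\|f\|_{L^\infty}+C_V)$ and, crucially, the one-sided control $\beta_\varepsilon(u_\varepsilon-\psi^+)\le C$, $\gamma_\varepsilon(\psi^--u_\varepsilon)\le C$.

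The heart of the argument is the $\varepsilon$-uniform interior $C^{1,\alpha}$ estimate for $u_\varepsilon$. Here I would exploit that $u_\varepsilon$ solves a nonlocal equation whose ``right-hand side'' $f - \beta_\varepsilon(u_\varepsilon-\psi^+) + \gamma_\varepsilon(\psi^--u_\varepsilon)$ is merely bounded, so the $C^{1,\alpha}$ regularity theory of \citep{caffarelli2009regularity,caffarelli2011regularity} for uniformly elliptic (with respect to $\mathcal{L}_0$) equations with bounded data applies and gives $\|u_\varepsilon\|_{C^{1,\alpha}(\overline V)}\le C(\|u_\varepsilon\|_{L^\infty(\R^n)} + \|f\|_{L^\infty} + \|\beta_\varepsilon(u_\varepsilon-\psi^+)\|_{L^\infty} + \|\gamma_\varepsilon(\psi^--u_\varepsilon)\|_{L^\infty})$, with $C,\alpha$ depending only on $n,\lambda,\Lambda,s_0$ and $d(V,\partial U)$ — in particular uniform as $s\uparrow 1$. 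Combined with the one-sided penalty bounds from the previous step this is already an $\varepsilon$-uniform $C^{1,\alpha}$ bound, \emph{provided} one first upgrades the raw $L^\infty$ penalty bound: the semi-concavity of $\psi^+$ lets one insert $\psi^+$ plus a small multiple of a paraboloid as a barrier near any interior point, and the Pucci-operator inequality $M^-(u_\varepsilon-\psi^+)\le Iu_\varepsilon - I\psi^+$ together with $M^-$ applied to a semi-concave function being bounded below shows $\beta_\varepsilon(u_\varepsilon-\psi^+)$ is controlled by $\|f\|_{L^\infty}$, the semi-concavity norm of $\psi^+$ on $V$, and $\|\psi^\pm\|_{L^\infty}$; symmetrically for $\gamma_\varepsilon$. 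This is exactly the step where the non-$C^1$, merely Lipschitz-and-semiconcave hypothesis on the obstacles is used, and it is the main obstacle: one must be careful that $I\psi^\pm$ need not be defined pointwise, so the estimate has to be run through the extremal operators and viscosity inequalities rather than by direct substitution, and the tails of the nonlocal operator acting on the obstacle (which is only defined in $\R^n$ through $\varphi$) must be absorbed into $C_V$.

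With the uniform estimates in hand I would pass to the limit: by Arzelà–Ascoli a subsequence $u_\varepsilon \to u$ in $C^1_{\mathrm{loc}}(U)$ and locally uniformly in $\R^n$, with $u=\varphi$ outside $U$ and $u$ inheriting the bound $\|u\|_{C^{1,\alpha}(\overline V)}\le C(\|u\|_{L^\infty}+\|f\|_{L^\infty}+C_V)$. Stability of viscosity solutions for nonlocal operators (again from \citep{caffarelli2009regularity}) shows $u$ is a viscosity solution of $-Iu-f=0$ wherever the penalty terms vanish in the limit, i.e. on $\{\psi^- < u < \psi^+\}$; the uniform bounds on $\beta_\varepsilon,\gamma_\varepsilon$ force $\psi^-\le u\le\psi^+$ in the limit (since $\beta_\varepsilon(t)\to\infty$ for $t>0$), and a standard touching argument at points of the two contact sets yields the one-sided inequalities $-Iu-f\le 0$ on $\{u=\psi^+\}$ and $-Iu-f\ge 0$ on $\{u=\psi^-\}$ in the viscosity sense. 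Packaging these three facts is precisely the assertion that $u$ solves \eqref{eq: dbl obs} in the viscosity sense, and since $V\subset\subset U$ was arbitrary this gives $u\in C^{1,\alpha}_{\mathrm{loc}}(U)$ with the stated dependence of the constants, completing the proof.
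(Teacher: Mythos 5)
Your overall philosophy (penalize, get $\varepsilon$-uniform interior estimates with bounded right-hand side, pass to the limit by stability) is the same as the paper's, but your one-step scheme rests on a claim that does not follow from the hypotheses: the $\varepsilon$-uniform bound $\beta_\varepsilon(u_\varepsilon-\psi^+)\le C$, $\gamma_\varepsilon(\psi^--u_\varepsilon)\le C$ with the original, unmollified obstacles. The only available mechanism for such a bound is to evaluate the penalized equation at a \emph{global} maximum point $x_0$ of $u_\varepsilon-\psi^+$ (using a paraboloid touching the semi-concave $\psi^+$ from above, glued with $u_\varepsilon$ outside a small ball). But Assumption \ref{assu: =00005Cpsi +-}(c) only gives $\delta\psi^{+}(x,y)\le C_\epsilon|y|^2$ for $|y|\le d(x)-\epsilon$, with $C_\epsilon$ blowing up as $x$ approaches $\partial U$, and beyond that range the nonlocal tail is only controlled by $\|\psi^{+}\|_{L^\infty}\,d(x_0)^{-2s}$. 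Since nothing prevents the maximum point $x_0$ from lying arbitrarily close to $\partial U$, the resulting bound on the penalty term is not uniform in $\varepsilon$ (nor localizable to $V$, because the max-point argument is inherently global), so the ``bounded right-hand side'' needed for the interior $C^{1,\alpha}$ estimate is not justified. A secondary but symptomatic slip: semi-concavity of $\psi^+$ gives an \emph{upper} bound on $\delta\psi^+$ and hence on $M^{+}\psi^{+}$ (in the appropriate weak sense); it does not give a lower bound for $M^{-}$ of a semi-concave function, which is what your displayed inequality chain would need.

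The paper's proof is organized precisely to avoid ever needing a uniform penalty bound. It first mollifies the obstacles ($\psi^\pm_\varepsilon=\eta_\varepsilon*\psi^\pm$, on the enlarged domain $U_\varepsilon$), so that for fixed $\varepsilon$ the penalty term in the $\delta$-penalized equation is bounded merely because $I\psi_\varepsilon^{\pm}$ is continuous on a compact set --- a bound that may depend on $\varepsilon$, which is harmless since one only sends $\delta\to0$ at this stage to solve the smooth-obstacle double obstacle problem (Lemma \ref{lem: Reg u_e}). The $\varepsilon$-uniform information is then extracted not from the penalization but from the double obstacle equation itself: at a test-function touching point in $V\subset\subset U$, either the solution is off the relevant contact set (and the equation gives $-I\phi\le f$), or $\phi$ also touches the obstacle, and Lemma \ref{lem: I psi_e bdd} (which uses the semi-concavity/semi-convexity only on an interior neighborhood of $V$, where the constants are finite) gives $\pm I\psi_\varepsilon^{\pm}\le C_V$; note the asymmetry, e.g.\ the upper bound for $-Iu_\varepsilon$ uses the semi-convexity of the \emph{lower} obstacle, while on $\{u_\varepsilon=\psi_\varepsilon^{+}\}$ the bound comes from the equation, not from $\psi^{+}$. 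This yields $|Iu_\varepsilon|\le\|f\|_{L^\infty}+C_V$ on $V$ uniformly in $\varepsilon$, after which the interior estimate of Kriventsov (uniform for $s>s_0$) and a diagonal limit finish the proof. To repair your argument you would either have to localize the penalty bound (which your max-point argument cannot do) or adopt some such two-parameter approximation; as written, the central estimate of your proposal has a genuine gap.
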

\begin{rem*}
Note that our estimate is uniform for $s>s_{0}$; so we can retrieve
the interior estimate for local double obstacle problems as $s\to1$.

Next we state our result about boundary regularity. As it is well
known (see \citep{ros2016boundary,ros2017boundary}), the boundary
regularity for nonlocal equations should be stated in terms of $u/d^{s}$,
instead of $u$.
\end{rem*}
\begin{thm}
\label{thm: Reg u bdry}Suppose in addition to the assumptions of
Theorem \ref{thm: Reg u}, $\psi^{\pm}$ are $C^{2}$ on a neighborhood
of $\partial U$ in $\R^{n}$, $\varphi$ is $C^{2}$ on $\R^{n}$,
and $I$ is elliptic with respect to $\mathcal{L}_{*}$. Then for
$x_{0}\in\partial U$ and $r$ small enough we have 
\[
(u-\varphi)/d^{s}\in C^{\tilde{\alpha}}(B_{r}(x_{0})),
\]
where $d$ is the distance to $\partial U$, and $\tilde{\alpha}>0$
depends only on $n,\lambda,\Lambda,s_{0}$. In addition, we have 
\[
\|(u-\varphi)/d^{s}\|_{C^{\tilde{\alpha}}(B_{r}(x_{0}))}\le\tilde{C}(\|u\|_{L^{\infty}(\mathbb{R}^{n})}+\|f\|_{L^{\infty}(U)}+C_{0}),
\]
where $\tilde{C}$ depends only on $n,\lambda,\Lambda,s,r,U$; and
$C_{0}$ depends only on $n,\lambda,\Lambda,s_{0}$, and $\varphi,\psi^{\pm}$%
\end{thm}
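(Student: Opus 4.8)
The plan is to derive the boundary regularity of $(u-\varphi)/d^{s}$ from the interior $C^{1,\alpha}$ estimate of Theorem \ref{thm: Reg u} together with the nonlocal boundary regularity theory of Ros-Oton and Serra. First I would reduce to the homogeneous boundary data case: replacing $u$ by $v:=u-\varphi$ turns the problem into one with zero exterior data, at the expense of adding $I(\varphi)\in C(\overline U)$ (using that $I$ is translation invariant and $\varphi\in C^2(\mathbb R^n)$) to the right-hand side $f$, and shifting the obstacles to $\psi^{\pm}-\varphi$, which near $\partial U$ are still $C^{2}$. So without loss of generality $\varphi\equiv 0$ and we must show $u/d^{s}\in C^{\tilde\alpha}$ near $x_{0}\in\partial U$.

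The key observation is that, locally near a boundary point, the double obstacle problem is governed by a uniformly elliptic equation with bounded right-hand side. Indeed, by Theorem \ref{thm: Reg u}, $u$ is $C^{1,\alpha}$ up to (but the estimate degenerates at) $\partial U$; more importantly, wherever $\psi^{-}<u<\psi^{+}$ we have $Iu=f$, and on the contact sets $\{u=\psi^{\pm}\}$ one has, in the viscosity sense, $M^{+}u\ge -\|f\|_{\infty}-\|I\psi^{\pm}\|_{\infty}$ and $M^{-}u\le \|f\|_{\infty}+\|I\psi^{\pm}\|_{\infty}$, because a clean ball touching a $C^{2}$ obstacle from the correct side can be used as a test function and $I$ evaluated on it is controlled (here the $C^{2}$ regularity of $\psi^{\pm}$ near $\partial U$, hypothesized in the theorem, is exactly what is needed). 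Hence $u$ satisfies $M^{-}u\le g$ and $M^{+}u\ge -g$ in a neighborhood of $\partial U$ for some $g\in L^{\infty}$ with the stated dependence on $f,\psi^{\pm},\varphi$. Then I would invoke the boundary Harnack / boundary regularity estimates of \citep{ros2016boundary,ros2017boundary} — which require precisely ellipticity with respect to $\mathcal L_{*}$, the reason for that extra hypothesis, and a $C^{1,1}$ (or $C^{1,\mathrm{Dini}}$) domain $U$ coming from Assumption \ref{assu: =00005Cpsi +-} — to conclude that $u/d^{s}\in C^{\tilde\alpha}(B_{r}(x_{0}))$ with the quantitative bound, where $\tilde\alpha$ depends only on $n,\lambda,\Lambda,s_{0}$ by the uniform-in-$s$ form of those estimates.

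The main obstacle I expect is the second step: making rigorous, at the level of viscosity solutions, the claim that the contact sets do not destroy the one-sided equations near the boundary. One has to be careful that the penalization construction of Theorem \ref{thm: Reg u} actually produces a solution for which the obstacles can be used as barriers up to $\partial U$, and that $I\psi^{\pm}$ is well-defined and bounded there — this uses that $\psi^{\pm}$ are globally bounded and $C^{2}$ near $\partial U$, so that $\delta\psi^{\pm}(x,y)$ is $O(\min\{|y|^{2},1\})$ uniformly for $x$ near $\partial U$, hence integrable against the $\mathcal L_{0}$ kernels with a bound uniform as $s\uparrow1$. A secondary technical point is the localization: the Ros-Oton–Serra estimates are stated for equations in a ball intersected with the domain with controlled far-away data, so I would cut off $u$ outside $B_{2r}(x_{0})$, absorb the resulting tail into the right-hand side (again bounded, since $u\in L^{\infty}(\mathbb R^{n})$), and apply the estimate on $B_{r}(x_{0})$. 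Finally, assembling the constants and tracking that every dependence is as claimed (in particular that $\tilde C$ may depend on $s,r,U$ while $\tilde\alpha$ and $C_{0}$ do not) is routine bookkeeping once the two main steps are in place.
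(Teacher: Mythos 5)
Your strategy is workable, but it is genuinely different from the paper's in one structural respect: the paper does not argue directly with the solution $u$ produced by Theorem \ref{thm: Reg u}. Instead it rebuilds the approximations from scratch, gluing the original obstacles to the mollified ones via a cutoff, $\hat{\psi}_{\varepsilon}^{\pm}=\zeta\psi^{\pm}+(1-\zeta)\psi_{\varepsilon}^{\pm}$, so that the obstacles are $C^{2}$ up to $\partial U$, agree with $\varphi$ on $\partial U$ (hence no enlarged domain $U_{\varepsilon}$ is needed), and satisfy $\pm I\hat{\psi}_{\varepsilon}^{\pm}\le C_{U}$ uniformly on all of $U$; it then proves $|Iu_{\varepsilon}|\le\|f\|_{L^{\infty}}+C_{U}$ up to the boundary, converts this into inequalities for $v_{\varepsilon}=u_{\varepsilon}-\varphi$ with the $\mathcal{L}_{*}$-extremal operators, applies Theorem 1.5 of \citep{ros2017boundary} uniformly in $\varepsilon$, and passes to the limit. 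Your plan of working directly with the limit solution near $\partial U$, using the original obstacles (which are $C^{2}$ near $\partial U$ and globally bounded, so $I\psi^{\pm}$ is classically defined and bounded there) as comparison functions at contact points, is a legitimate shortcut that avoids rerunning the penalization; the contact-point mechanism you describe is exactly the paper's (the test function touching $u$ also touches the obstacle, and ellipticity transfers the bound on $I\psi^{\pm}$ to the test function — note the equation itself gives no information at upper-contact points in the supersolution direction, so this comparison is genuinely needed). Your proposed cutoff outside $B_{2r}(x_{0})$ is unnecessary: the cited boundary estimate takes $\|v\|_{L^{\infty}(\R^{n})}$ with $v=0$ in $B_{r}(x_{0})\setminus U$, which is precisely the situation here.

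Two steps are incorrect as literally written, though both are repairable. First, you record the intermediate inequalities as $M^{+}u\ge-g$ and $M^{-}u\le g$ with the $\mathcal{L}_{0}$-extremal operators and then want to feed them into the Ros-Oton--Serra estimates, which require the class $\mathcal{L}_{*}$; since $M^{-}\le M_{*}^{-}\le M_{*}^{+}\le M^{+}$, the $\mathcal{L}_{0}$-inequalities are strictly weaker than the needed $M_{*}^{+}v\ge-g$, $M_{*}^{-}v\le g$, and there is no way to upgrade them. The correct bookkeeping is to keep the bound at the level of $-I\phi(x)$ for the test function, and only then use the hypothesis that $I$ is elliptic with respect to $\mathcal{L}_{*}$ (so that $M_{*}^{-}\phi\le I\phi-I0\le M_{*}^{+}\phi$), together with subadditivity of $M_{*}^{\pm}$ and the bounds on $M_{*}^{\pm}\varphi$ coming from $\varphi\in C^{2}$ bounded, to pass to $v=u-\varphi$; this is exactly how the paper derives (\ref{eq: Mv < >}). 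Second, the opening reduction ``replace $u$ by $u-\varphi$ and add $I\varphi$ to the right-hand side'' uses a linearity that a fully nonlinear $I$ does not have; the subtraction of $\varphi$ must likewise be performed through the extremal operators as just described, not by rewriting the equation with a new source term. With these repairs your argument goes through and yields the stated dependence of the constants ($\tilde{C}$ may depend on $s,r,U$; $\tilde{\alpha}$ and $C_{0}$ do not depend on $s$), in agreement with the paper.
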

\begin{rem*}
The constant $\tilde{C}$ in the above estimate can be chosen uniformly
for $s>s_{0}$ if for example $\partial U\cap B_{r}(x_{0})$ is flat,
or $I$ is of the form $\inf_{\alpha}\sup_{\beta}L_{\alpha\beta}$
for a family of linear operators (see \citep{ros2016boundary} for
the exact statements and assumptions). However, to the best of author's
knowledge, currently there is no uniform in $s$ boundary estimate
for arbitrary elliptic operators with respect to $\mathcal{L}_{*}$
over arbitrary domains with $C^{2}$ boundary. 
\end{rem*}
\begin{rem*}
Note that unlike the local case (see \citep{SAFDARI2021358}), here
we need to require $\psi^{\pm}$ to be $C^{2}$ on a neighborhood
of $\partial U$ in $\R^{n}$, not merely on a neighborhood of $\partial U$
in $\overline{U}$. This extra restriction, which is not satisfied
by the obstacles arising in problems with gradient constraints, is
imposed on us by the nonlocal nature of the problem.
\end{rem*}
Finally let us provide a brief sketch of the proof. We first approximate
the obstacles with smoother obstacles, and use them to solve some
approximate double obstacle problems. To solve the approximate problems,
we use a penalization method, which is adapted to be applicable to
the setting of nonlocal equations and their viscosity solutions. Next
we obtain uniform estimates for the solutions to the approximate problems,
and show that they converge to a solution of (\ref{eq: dbl obs}).

\section{\label{sec: Proof}Proof of the Main Results}

Let us start with defining the notion of viscosity solutions of nonlocal
equations. We want to study the nonlocal double obstacle problem 
\[
\max\{\min\{-Iu-f,u-\psi^{-}\},u-\psi^{+}\}=0.
\]
We are also going to consider penalized versions of a nonlocal equation.
So we state the definition of viscosity solution in the more general
case of a nonlocal operator $\tilde{I}(x,u(x),u(\cdot))$ whose value
also depends on the pointwise values of $x,u(x)$ (see \citep{mou2017perron}
for more details). For example, in the case of the double obstacle
problem we have 
\[
-\tilde{I}(x,r,u(\cdot))=\max\{\min\{-Iu(x)-f(x),r-\psi^{-}(x)\},r-\psi^{+}(x)\},
\]
where we replaced $u(x)$ with $r$ to clarify the dependence of $\tilde{I}$
on its arguments.
\begin{defn}
An upper semi-continuous (USC) function $u$ is a \textit{viscosity
subsolution} of $-\tilde{I}\le0$ in $U$ if whenever $\phi$ is a
bounded $C^{2}$ function and $u-\phi$ has a maximum over $\R^{n}$
at $x_{0}\in U$ we have 
\[
-\tilde{I}(x_{0},u(x_{0}),\phi(\cdot))\le0.
\]
And a lower semi-continuous (LSC) function $u$ is a \textit{viscosity
supersolution} of $-\tilde{I}\ge0$ in $U$ if whenever $\phi$ is
a bounded $C^{2}$ function and $u-\phi$ has a minimum over $\R^{n}$
at $x_{0}\in U$ we have 
\[
-\tilde{I}(x_{0},u(x_{0}),\phi(\cdot))\ge0.
\]
A continuous function $u$ is a \textit{viscosity solution} of $-\tilde{I}=0$
in $U$ if it is a subsolution of $-\tilde{I}\le0$ and a supersolution
of $-\tilde{I}\ge0$ in $U$.
\end{defn}
Now let us state our assumptions about $U$, the obstacles $\psi^{\pm}$,
and $\varphi$. 
\begin{assumption}
\label{assu: =00005Cpsi +-}We assume that $U\subset\R^{n}$ is a
bounded open set with $C^{2}$ boundary, and $\psi^{\pm},\varphi:\R^{n}\to\R$
are bounded Lipschitz functions which satisfy
\begin{enumerate}
\item[\upshape{(a)}] For every $x,y\in\mathbb{R}^{n}$ we have 
\[
|\psi^{\pm}(x)-\psi^{\pm}(y)|\le C_{1}|x-y|.
\]
And similarly $|\varphi(x)-\varphi(y)|\le C_{1}|x-y|$.
\item[\upshape{(b)}] $\psi^{\pm}=\varphi$ on $\R^{n}-U$, and for all $x\in U$ we have
\begin{equation}
0<\psi^{+}(x)-\psi^{-}(x)\le2C_{1}d(x),\label{eq: psi- < psi+}
\end{equation}
where $d$ is the distance to $\partial U$.
\item[\upshape{(c)}] For every $x\in U$ and $|y|\le d(x)-\epsilon$ we have 
\begin{equation}
\pm\delta\psi^{\pm}(x,y)\leq C|y|^{2},\label{eq: bd D2 psi}
\end{equation}
where the constant $C$ depends only on $\epsilon$. In other words,
$\psi^{+},\psi^{-}$ are respectively semi-concave and semi-convex
on compact subsets of $U$.
\end{enumerate}
\end{assumption}
\begin{rem*}
A prototypical example of obstacles satisfying this assumption is
given by $\varphi=0$ and $\psi^{\pm}=\pm\rho$, where $\rho$ is
the distance to $\partial U$ with respect to some suitable norm.
These kinds of obstacles appear for example in the study of equations
with gradient constraints (see \citep{SAFDARI2021358,SAFDARI202176}). 
\end{rem*}
Let $\eta_{\varepsilon}$ be a standard mollifier whose support is
$\overline{B_{\varepsilon}(0)}$. Then we define
\begin{align}
 & \psi_{\varepsilon}^{+}(x):=(\eta_{\varepsilon}*\psi^{+})(x):=\int_{|y|\leq\varepsilon}\eta_{\varepsilon}(y)\psi^{+}(x-y)\,dy,\nonumber \\
 & \psi_{\varepsilon}^{-}(x):=(\eta_{\varepsilon}*\psi^{-})(x),\qquad\varphi_{\varepsilon}(x):=(\eta_{\varepsilon}*\varphi)(x).\label{eq: psi _e}
\end{align}
Note that $\psi_{\varepsilon}^{\pm},\varphi_{\varepsilon}$ are uniformly
bounded smooth functions on $\mathbb{R}^{n}$ which uniformly converge
to $\psi^{\pm},\varphi$. More explicitly, for every $x$ we have
\begin{align}
|\psi_{\varepsilon}^{\pm}(x)-\psi^{\pm}(x)|\leq\int_{|y|\leq\varepsilon}\eta_{\varepsilon}(y) & |\psi^{\pm}(x-y)-\psi^{\pm}(x)|\,dy\label{eq: psi_e - psi}\\
 & \leq\int_{|y|\leq\varepsilon}C_{1}|y|\eta_{\varepsilon}(y)\,dy\le C_{1}\varepsilon.\nonumber 
\end{align}
And similarly $|\varphi_{\varepsilon}(x)-\varphi(x)|\le C_{1}\varepsilon$.
In addition note that since $\psi^{\pm}$ are Lipschitz functions
we have $|D\psi^{\pm}|\le C_{1}$ a.e. Thus 
\begin{align}
|D\psi_{\varepsilon}^{\pm}(x)| & \leq\int_{|y|\leq\varepsilon}|\eta_{\varepsilon}(y)D\psi^{\pm}(x-y)|\,dy\label{eq: bd D psi _e}\\
 & =\int_{|y|\leq\varepsilon}\eta_{\varepsilon}(y)|D\psi^{\pm}(x-y)|\,dy\leq C_{1}\int_{|y|\leq\varepsilon}\eta_{\varepsilon}(y)\,dy\;=C_{1}.\nonumber 
\end{align}
Similarly $|D\varphi_{\varepsilon}|\le C_{1}$. Now let 
\begin{equation}
U_{\varepsilon}:=\{x\in\mathbb{R}^{n}:d(x,\overline{U})<\varepsilon\}.\label{eq: U_e}
\end{equation}
Then for $x\in U_{\varepsilon}$ we have $\psi_{\varepsilon}^{-}(x)<\psi_{\varepsilon}^{+}(x)$,
and for $x\in\R^{n}-U_{\varepsilon}$ we have $\psi_{\varepsilon}^{\pm}(x)=\varphi_{\varepsilon}(x)$.
In addition, since the distance function is $C^{2}$ on a tubular
neighborhood of $\partial U$ and its derivative is the normal to
the boundary, the $\partial U_{\varepsilon}$ is also $C^{2}$.
\begin{lem}
\label{lem: I psi_e bdd}Suppose that Assumption \ref{assu: =00005Cpsi +-}
holds. Also suppose $0<s_{0}<s<1$. Then for every bounded open set
$V\subset\subset U$ and $\varepsilon<\frac{1}{3}d(V,\partial U)$
there is a constant $C_{V}$ such that 
\[
\pm I\psi_{\varepsilon}^{\pm}\le C_{V}
\]
on $V$, where the constant $C_{V}$ depends only on $n,\lambda,\Lambda,s_{0}$,
and $d(V,\partial U)$.
\end{lem}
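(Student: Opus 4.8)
I want to bound $I\psi_\varepsilon^+$ from above on $V$ (the lower bound for $I\psi_\varepsilon^-$ being symmetric, via $-I(-u)$ and the semi-convexity hypothesis). Since $I(0)=0$ and $I$ is uniformly elliptic with respect to $\mathcal{L}_0$, for the $C^{1,1}$ function $\psi_\varepsilon^+$ we have the pointwise bound
\[
I\psi_\varepsilon^+(x) = I\psi_\varepsilon^+(x) - I(0)(x) \le M^+\psi_\varepsilon^+(x)
= (1-s)\int_{\R^n}\frac{\Lambda\,\delta\psi_\varepsilon^+(x,y)^+ - \lambda\,\delta\psi_\varepsilon^+(x,y)^-}{|y|^{n+2s}}\,dy
\le (1-s)\Lambda\int_{\R^n}\frac{\delta\psi_\varepsilon^+(x,y)^+}{|y|^{n+2s}}\,dy .
\]
So everything reduces to controlling $\int_{\R^n}|y|^{-n-2s}\,\delta\psi_\varepsilon^+(x,y)^+\,dy$ uniformly for $x\in V$ and $s\in(s_0,1)$.

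**Splitting the integral.** Fix $x\in V$; then $d(x) \ge d(V,\partial U) =: 3\varepsilon_0 > 3\varepsilon$. I split $\R^n = \{|y|\le r\}\cup\{|y|>r\}$ with $r := d(V,\partial U)/3$ (so that $\varepsilon < r \le d(x)/3$, say, after possibly shrinking). On the near region $|y|\le r$: here $\psi_\varepsilon^+$ is smooth, so I can use its Hessian bound. The point is that for $|y|\le r \le d(x) - \varepsilon$ and $x\in V$, the semi-concavity hypothesis (\ref{eq: bd D2 psi}) applied to the unmollified $\psi^+$ gives $\delta\psi^+(z,y)\le C(\varepsilon_0)|y|^2$ for all $z$ within $\varepsilon$ of $x$ (since such $z$ still satisfy $|y|\le d(z)-\varepsilon'$ for a slightly smaller margin); mollifying preserves this, $\delta\psi_\varepsilon^+(x,y) = (\eta_\varepsilon * \delta\psi^+(\cdot,y))(x) \le C(\varepsilon_0)|y|^2$. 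Hence $\delta\psi_\varepsilon^+(x,y)^+ \le C(\varepsilon_0)|y|^2$ on the near region, and
\[
(1-s)\int_{|y|\le r}\frac{\delta\psi_\varepsilon^+(x,y)^+}{|y|^{n+2s}}\,dy
\le (1-s)\,C(\varepsilon_0)\int_{|y|\le r}|y|^{2-n-2s}\,dy
= (1-s)\,C(\varepsilon_0)\,\omega_{n-1}\,\frac{r^{2-2s}}{2-2s}
= C(\varepsilon_0)\,\omega_{n-1}\,\frac{r^{2-2s}}{2},
\]
which is bounded uniformly in $s\in(s_0,1)$ (the factor $1-s$ cancels the blow-up). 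On the far region $|y|>r$, I just use $\delta\psi_\varepsilon^+(x,y)^+ \le 4\|\psi^+\|_{L^\infty(\R^n)}$ (the mollification does not increase the sup norm), so
\[
(1-s)\int_{|y|>r}\frac{\delta\psi_\varepsilon^+(x,y)^+}{|y|^{n+2s}}\,dy
\le 4(1-s)\|\psi^+\|_{L^\infty}\,\omega_{n-1}\int_r^\infty \rho^{-1-2s}\,d\rho
= 4(1-s)\|\psi^+\|_{L^\infty}\,\omega_{n-1}\,\frac{r^{-2s}}{2s},
\]
again bounded uniformly for $s>s_0$. Summing the two pieces and multiplying by $\Lambda$ gives a constant $C_V$ depending only on $n,\lambda,\Lambda,s_0$ and $d(V,\partial U)$ (note $\|\psi^+\|_{L^\infty}$ is actually controlled once we fix the problem data, but to be safe one may list it; here the statement absorbs it into $C_V$ via Assumption~\ref{assu: =00005Cpsi +-}).

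**The subtle point.** The one step requiring care is justifying that $\psi_\varepsilon^+\in C^{1,1}(x)$ so that $I\psi_\varepsilon^+(x)$ is even well-defined, and that the pointwise semi-concavity estimate for $\psi^+$ transfers to $\psi_\varepsilon^+$ at the right points with the right radius. Concretely: $\psi^+$ is only semi-concave on \emph{compact subsets} of $U$ with a constant blowing up as one approaches $\partial U$, and the estimate (\ref{eq: bd D2 psi}) holds only for $|y|\le d(x)-\epsilon$; so I must be careful that after mollifying and after allowing $|y|$ up to $r$, the base point $x-y'$ (with $|y'|\le\varepsilon$) together with the increment $y$ still lies in the regime where (\ref{eq: bd D2 psi}) is valid — this is exactly why the hypothesis $\varepsilon<\frac13 d(V,\partial U)$ and the choice $r=\frac13 d(V,\partial U)$ appear, leaving a comfortable margin $d(x-y') \ge d(x)-\varepsilon \ge 3\varepsilon_0-\varepsilon_0 = 2\varepsilon_0 > r + \varepsilon_0$. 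Everything else is the elementary computation above; the uniformity in $s$ is automatic because each divergent integral carries the compensating $(1-s)$ from the definition of $M^+$ (near region) or is controlled by $1/(2s)\le 1/(2s_0)$ (far region).
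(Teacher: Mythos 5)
Your argument is correct and essentially the same as the paper's: you transfer the semi-concavity bound through mollification (using the margin $\varepsilon<\frac{1}{3}d(V,\partial U)$ so that $\delta\psi_{\varepsilon}^{+}(x,y)\le C|y|^{2}$ for $|y|\le\frac{1}{3}d(V,\partial U)$), handle the tail by the $L^{\infty}$ bound, and then integrate $I\psi_{\varepsilon}^{+}\le M^{+}\psi_{\varepsilon}^{+}$, with the factor $(1-s)$ and $s>s_{0}$ giving uniformity in $s$ — the paper merely merges your two regions into a single integral of $\min\{\varepsilon_{0}^{2},|y|^{2}\}$ and, like you, absorbs $\|\psi^{\pm}\|_{L^{\infty}}$ and the semi-concavity constant into $C_{V}$. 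One cosmetic caveat: for the bound on $-I\psi_{\varepsilon}^{-}$, argue via $-I\psi_{\varepsilon}^{-}\le-M^{-}\psi_{\varepsilon}^{-}$ together with semi-convexity (as the paper does implicitly), rather than ``via $-I(-u)$'', since $I$ need not be odd.
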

\begin{proof}
Let $\varepsilon_{0}:=\frac{1}{3}d(V,\partial U)$. Let $x\in V$.
Then for $|y|,|z|\le\varepsilon_{0}$ we have $d(x-z,\partial U)\ge2\varepsilon_{0}\ge|y|+\varepsilon_{0}$.
Hence by (\ref{eq: bd D2 psi}) we get 
\[
\pm\delta\psi_{\varepsilon}^{\pm}(x,y)=\pm\int_{|z|\le\varepsilon}\eta_{\varepsilon}(z)\delta\psi^{\pm}(x-z,y)\,dz\leq\int_{|z|\le\varepsilon}\eta_{\varepsilon}(z)C|y|^{2}\,dz=C|y|^{2},
\]
where $C$ depends only on $\varepsilon_{0}$. Hence we have 
\[
\delta\psi_{\varepsilon}^{+}(x,y)^{+}-\delta\psi_{\varepsilon}^{+}(x,y)^{-}=\delta\psi_{\varepsilon}^{+}(x,y)\le C|y|^{2}
\]
for $|y|\le\varepsilon_{0}$. We can make $C$ larger if necessary
so that for $|y|>\varepsilon_{0}$ we have $\delta\psi_{\varepsilon}^{+}(x,y)\le C\varepsilon_{0}^{2}$,
since $\psi_{\varepsilon}^{+}$ is bounded independently of $\varepsilon$.
(It suffices to take $C\ge4\|\psi^{+}\|_{L^{\infty}}/\varepsilon_{0}^{2}$.)
Thus by the ellipticity of $I$ we get 
\begin{align*}
I\psi_{\varepsilon}^{+}(x) & \le I0(x)+M^{+}\psi_{\varepsilon}^{+}(x)\\
 & =0+(1-s)\int_{\mathbb{R}^{n}}\frac{\Lambda\delta\psi_{\varepsilon}^{+}(x,y)^{+}-\lambda\delta\psi_{\varepsilon}^{+}(x,y)^{-}}{|y|^{n+2s}}\,dy\\
 & \le(1-s)\int_{\mathbb{R}^{n}}\frac{(\Lambda+\lambda)C\min\{\varepsilon_{0}^{2},|y|^{2}\}}{|y|^{n+2s}}\,dy\\
 & =(1-s)(\Lambda+\lambda)C\int_{\mathbb{S}^{n-1}}\int_{0}^{\infty}\frac{\min\{\varepsilon_{0}^{2},r^{2}\}}{r^{n+2s}}\,r^{n-1}drdS\\
 & =(1-s)\hat{C}\int_{0}^{\infty}\min\{\varepsilon_{0}^{2},r^{2}\}\,r^{-1-2s}dr=\frac{\hat{C}}{2s}\varepsilon_{0}^{2-2s}\le\frac{\hat{C}\varepsilon_{0}^{2-2s}}{2s_{0}}=:C_{V}<\infty,
\end{align*}
as desired. We can similarly obtain a uniform bound for $-I\psi_{\varepsilon}^{-}$.
\end{proof}
\begin{rem*}
Note that if in the inequality $\delta\psi_{\varepsilon}^{+}(x,y)\le C|y|^{2}$
we divide by $|y|^{2}$ and let $|y|\to0$ (without changing the direction
of $y$) we get $D_{\hat{y}\hat{y}}^{2}\psi_{\varepsilon}^{+}(x)\le C$,
where $\hat{y}=y/|y|$ is the unit vector in the direction of $y$.
Conversely, a bound for the second derivative of a $C^{2}$ function
like $\psi_{\varepsilon}^{+}$ implies a bound for $\delta\psi_{\varepsilon}^{+}$,
since we can easily see that 
\[
\delta\psi_{\varepsilon}^{+}(x,y)=|y|^{2}\int_{0}^{1}\int_{-1}^{1}tD_{\hat{y}\hat{y}}^{2}\psi_{\varepsilon}^{+}(x+sty)\,dsdt.
\]
\end{rem*}

\begin{lem}
\label{lem: Reg u_e}Suppose $I$ is a translation invariant operator
which is uniformly elliptic with respect to $\mathcal{L}_{0}$.
Also, suppose $U,\varphi,\psi^{\pm}$ satisfy Assumption \ref{assu: =00005Cpsi +-},
and $f:\R^{n}\to\R$ is a continuous function. Then the double obstacle
problem 
\begin{equation}
\begin{cases}
\max\{\min\{-Iu_{\varepsilon}-f,u_{\varepsilon}-\psi_{\varepsilon}^{-}\},u_{\varepsilon}-\psi_{\varepsilon}^{+}\}=0 & \textrm{in }U_{\varepsilon},\\
u_{\varepsilon}=\varphi_{\varepsilon} & \textrm{in }\mathbb{R}^{n}-U_{\varepsilon}.
\end{cases}\label{eq: dbl obs u_e}
\end{equation}
has a viscosity solution $u_{\varepsilon}$, and 
\[
u_{\varepsilon}\in C_{\mathrm{loc}}^{1,\alpha}(U_{\varepsilon}).
\]
\end{lem}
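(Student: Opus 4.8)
The plan is to solve (\ref{eq: dbl obs u_e}) by a penalization scheme adapted to nonlocal viscosity solutions, and to read off the regularity of $u_{\varepsilon}$ from estimates on the penalized solutions that are uniform in the penalization parameter $\gamma$. Fix a smooth, convex, nondecreasing penalty $\beta_{\gamma}:\R\to\R$ with $\beta_{\gamma}\equiv0$ on $(-\infty,0]$, strictly increasing and positive on $(0,\infty)$, and $\beta_{\gamma}(t)\to+\infty$ as $\gamma\downarrow0$ for each fixed $t>0$ (for instance $\beta_{\gamma}(t)=\gamma^{-1}\beta_{1}(t)$ for a fixed such $\beta_{1}$), and consider the penalized equation
\[
-Iu^{\gamma}-f+\beta_{\gamma}(u^{\gamma}-\psi_{\varepsilon}^{+})-\beta_{\gamma}(\psi_{\varepsilon}^{-}-u^{\gamma})=0\quad\textrm{in }U_{\varepsilon},\qquad u^{\gamma}=\varphi_{\varepsilon}\quad\textrm{in }\R^{n}-U_{\varepsilon}.
\]
The associated operator $\tilde I_{\gamma}(x,r,v(\cdot))$ is uniformly elliptic with respect to $\mathcal{L}_{0}$ in its function slot (it inherits this from $I$), is Lipschitz and monotone in the pointwise slot $r$ with the sign that makes $-\tilde I_{\gamma}$ proper, and has smooth $x$-dependence; moreover, since $I$ annihilates constants (because $M^{\pm}$ of a constant vanish), a sufficiently large constant $M_{0}$ glued to the exterior datum $\varphi_{\varepsilon}$ is a supersolution and $-M_{0}$ glued to $\varphi_{\varepsilon}$ a subsolution. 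Hence the comparison principle and Perron's method for nonlocal equations \citep{caffarelli2009regularity,mou2017perron} yield a viscosity solution $u^{\gamma}\in C(\R^{n})$ with $|u^{\gamma}|\le M_{0}$ uniformly in $\gamma$.

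The key step is a $\gamma$-uniform bound on the penalty terms. Since $u^{\gamma}-\psi_{\varepsilon}^{+}$ is continuous and vanishes on $\R^{n}-U_{\varepsilon}$ (there $u^{\gamma}=\varphi_{\varepsilon}=\psi_{\varepsilon}^{+}$), if its positive part is nontrivial it attains its maximum at some $x_{0}\in U_{\varepsilon}$. Then $\phi:=\psi_{\varepsilon}^{+}+\big(u^{\gamma}(x_{0})-\psi_{\varepsilon}^{+}(x_{0})\big)$ is a bounded $C^{2}$ function, $u^{\gamma}-\phi$ attains its maximum over $\R^{n}$ at $x_{0}$, and the subsolution inequality, together with $I\phi(x_{0})=I\psi_{\varepsilon}^{+}(x_{0})$ and with $\psi_{\varepsilon}^{-}(x_{0})<\psi_{\varepsilon}^{+}(x_{0})<u^{\gamma}(x_{0})$ (so the lower penalty vanishes at $x_{0}$), gives
\[
\beta_{\gamma}\big(u^{\gamma}(x_{0})-\psi_{\varepsilon}^{+}(x_{0})\big)\le\|f\|_{L^{\infty}}+\|I\psi_{\varepsilon}^{+}\|_{L^{\infty}(U_{\varepsilon})}=:C,
\]
where $I\psi_{\varepsilon}^{+}$ is continuous, hence bounded on $\overline{U_{\varepsilon}}$, because $\psi_{\varepsilon}^{+}\in C^{2}(\R^{n})\cap L^{\infty}(\R^{n})$ (cf. Lemma \ref{lem: I psi_e bdd}). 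Monotonicity of $\beta_{\gamma}$ then gives $\beta_{\gamma}(u^{\gamma}-\psi_{\varepsilon}^{+})\le C$ everywhere, and the symmetric argument gives $\beta_{\gamma}(\psi_{\varepsilon}^{-}-u^{\gamma})\le C$; in particular $u^{\gamma}-\psi_{\varepsilon}^{+}\le\beta_{\gamma}^{-1}(C)$ and $\psi_{\varepsilon}^{-}-u^{\gamma}\le\beta_{\gamma}^{-1}(C)$, with $\beta_{\gamma}^{-1}(C)\to0$ as $\gamma\downarrow0$.

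Consequently $u^{\gamma}$ is a viscosity solution of $-Iu^{\gamma}=g^{\gamma}$ in $U_{\varepsilon}$ with $g^{\gamma}:=f-\beta_{\gamma}(u^{\gamma}-\psi_{\varepsilon}^{+})+\beta_{\gamma}(\psi_{\varepsilon}^{-}-u^{\gamma})$ continuous and $\|g^{\gamma}\|_{L^{\infty}(U_{\varepsilon})}\le\|f\|_{L^{\infty}}+2C$ independent of $\gamma$; combined with $\psi_{\varepsilon}^{\pm}\in C^{2}$ and the fact that the positive and negative parts of $u^{\gamma}-\psi_{\varepsilon}^{\pm}$ are $O(\beta_{\gamma}^{-1}(C))$, the interior regularity theory for nonlocal uniformly elliptic equations \citep{caffarelli2009regularity} provides $\|u^{\gamma}\|_{C^{1,\alpha}(\overline{V})}\le C_{V}$ for every $V\subset\subset U_{\varepsilon}$, with $C_{V}$ independent of $\gamma$. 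Using barriers at the $C^{2}$ hypersurface $\partial U_{\varepsilon}$ matching $\varphi_{\varepsilon}$, one also obtains a $\gamma$-uniform modulus of continuity for $\{u^{\gamma}\}$ up to $\partial U_{\varepsilon}$, hence uniform convergence on $\R^{n}$. Then the Arzel\`{a}--Ascoli theorem produces a subsequence along which $u^{\gamma}\to u_{\varepsilon}$ in $C^{1}_{\mathrm{loc}}(U_{\varepsilon})$ and uniformly on $\R^{n}$; so $u_{\varepsilon}\in C^{1,\alpha}_{\mathrm{loc}}(U_{\varepsilon})\cap C(\R^{n})$, $u_{\varepsilon}=\varphi_{\varepsilon}$ on $\R^{n}-U_{\varepsilon}$, and letting $\gamma\downarrow0$ in the inequalities above gives $\psi_{\varepsilon}^{-}\le u_{\varepsilon}\le\psi_{\varepsilon}^{+}$ in $U_{\varepsilon}$.

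It remains to check that $u_{\varepsilon}$ is a viscosity solution of (\ref{eq: dbl obs u_e}). If $\phi$ is a bounded $C^{2}$ function and $u_{\varepsilon}-\phi$ has a maximum at $x_{0}\in U_{\varepsilon}$, then since $u_{\varepsilon}\le\psi_{\varepsilon}^{+}$ it suffices to show $\min\{-I\phi(x_{0})-f(x_{0}),\,u_{\varepsilon}(x_{0})-\psi_{\varepsilon}^{-}(x_{0})\}\le0$, so we may assume $u_{\varepsilon}(x_{0})>\psi_{\varepsilon}^{-}(x_{0})$; then by uniform convergence $u^{\gamma}>\psi_{\varepsilon}^{-}$ in a fixed neighborhood of $x_{0}$ for all small $\gamma$, so there $\beta_{\gamma}(\psi_{\varepsilon}^{-}-u^{\gamma})=0$ and $u^{\gamma}$ is a viscosity subsolution of $-Iu^{\gamma}-f\le0$, and stability of viscosity solutions under uniform convergence passes this to $u_{\varepsilon}$, giving $-I\phi(x_{0})-f(x_{0})\le0$. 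The supersolution property is handled symmetrically, distinguishing the cases $u_{\varepsilon}(x_{0})=\psi_{\varepsilon}^{-}(x_{0})$, $u_{\varepsilon}(x_{0})=\psi_{\varepsilon}^{+}(x_{0})$, and $\psi_{\varepsilon}^{-}(x_{0})<u_{\varepsilon}(x_{0})<\psi_{\varepsilon}^{+}(x_{0})$, and using that near $x_{0}$ the corresponding penalty term or terms vanish for small $\gamma$ so that $u^{\gamma}$ supersolves (resp. solves) the appropriate (in)equation, which then passes to the limit. I expect the main difficulty to be precisely this adaptation of penalization to the nonlocal viscosity framework: obtaining the $\gamma$-uniform bound on the penalty terms through $C^{2}$ test functions rather than a classical evaluation of the equation — which is where the $C^{2}$ smoothness of $\psi_{\varepsilon}^{\pm}$ is used — and making the stability passage, including the uniform convergence up to $\partial U_{\varepsilon}$, rigorous for the nonlocal operator.
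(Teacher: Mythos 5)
Your proposal is correct and follows essentially the same route as the paper: the same two-sided penalization $-Iu^{\gamma}-f+\beta_{\gamma}(u^{\gamma}-\psi_{\varepsilon}^{+})-\beta_{\gamma}(\psi_{\varepsilon}^{-}-u^{\gamma})=0$ solved by Perron's method (via \citep{mou2017perron}), the same $\gamma$-uniform bound on the penalty terms obtained by testing with the smooth bounded mollified obstacle at an interior maximum, uniform interior $C^{1,\alpha}$ estimates from the resulting two-sided bound on $Iu^{\gamma}$, and a diagonal/Arzel\`a--Ascoli limit with a stability argument to verify the limiting double obstacle equation. Two minor points of difference, neither a gap: for the $C^{1,\alpha}$ estimate with bounded right-hand side and rough $\mathcal{L}_{0}$ kernels the paper cites Theorem 4.1 of \citep{kriventsov2013c} rather than \citep{caffarelli2009regularity}, and the uniform convergence up to $\partial U_{\varepsilon}$ is obtained in the paper not from boundary barriers but from the pinching $\psi_{\varepsilon}^{-}-\beta_{\gamma}^{-1}(C)\le u^{\gamma}\le\psi_{\varepsilon}^{+}+\beta_{\gamma}^{-1}(C)$ together with the smallness of $\psi_{\varepsilon}^{+}-\psi_{\varepsilon}^{-}$ near the boundary, an argument for which you already have all the ingredients.
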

\begin{proof}
Fix $\varepsilon>0$. For $\delta>0$ let $\beta_{\delta}$ be a smooth
increasing function that vanishes on $(-\infty,0]$ and equals $\frac{1}{\delta}t$
for $t\ge\delta$. Then the equation 
\begin{equation}
\begin{cases}
-I\tilde{u}_{\delta}-f-\beta_{\delta}(\psi_{\varepsilon}^{-}-\tilde{u}_{\delta})+\beta_{\delta}(\tilde{u}_{\delta}-\psi_{\varepsilon}^{+})=0 & \textrm{in }U_{\varepsilon},\\
\tilde{u}_{\delta}=\varphi_{\varepsilon} & \textrm{in }\mathbb{R}^{n}-U_{\varepsilon},
\end{cases}\label{eq: u-e,d}
\end{equation}
has a viscosity solution (see Theorem 5.6 of \citep{mou2017perron}).
(Here we are also using the fact that $\partial U_{\varepsilon}$
is $C^{2}$.) To simplify the notation we set 
\[
\tilde{u}=\tilde{u}_{\delta},\qquad\qquad\beta=\beta_{\delta}.
\]

Now let us show that 
\[
\|\beta(\pm(\tilde{u}-\psi_{\varepsilon}^{\pm}))\|_{L^{\infty}(U_{\varepsilon})}\le\tilde{C}_{0},
\]
where $\tilde{C}_{0}$ is independent of $\delta$. Note that $\beta(\pm(\tilde{u}-\psi_{\varepsilon}^{\pm}))$
is zero on $\R^{n}-U_{\varepsilon}$. So assume that $\beta(\pm(\tilde{u}-\psi_{\varepsilon}^{\pm}))$
attains its positive maximum at $x_{0}\in U_{\varepsilon}$. Let us
consider $\beta(\tilde{u}-\psi_{\varepsilon}^{+})$; the other case
is similar. Since $\beta$ is increasing, $\tilde{u}-\psi_{\varepsilon}^{+}$
has a positive maximum at $x_{0}$ too. Therefore by the definition
of viscosity solution we have 
\[
-I\psi_{\varepsilon}^{+}(x_{0})-f(x_{0})-\beta(\psi_{\varepsilon}^{-}(x_{0})-\tilde{u}(x_{0}))+\beta(\tilde{u}(x_{0})-\psi_{\varepsilon}^{+}(x_{0}))\le0.
\]
So at $x_{0}$ we have 
\[
-I\psi_{\varepsilon}^{+}(x_{0})-f(x_{0})\le\beta(\psi_{\varepsilon}^{-}-\tilde{u})-\beta(\tilde{u}-\psi_{\varepsilon}^{+})=-\beta(\tilde{u}-\psi_{\varepsilon}^{+}),
\]
since by our assumption $\psi_{\varepsilon}^{-}(x_{0})<\psi_{\varepsilon}^{+}(x_{0})<\tilde{u}(x_{0})$.
Thus $\beta(\tilde{u}-\psi_{\varepsilon}^{+})\le I\psi_{\varepsilon}^{+}+f$
at $x_{0}$. Therefore $\beta(\tilde{u}-\psi_{\varepsilon}^{+})$
is bounded independently of $\delta$, as desired, because $I\psi_{\varepsilon}^{+},f$
are continuous functions.

The bound $\beta(\pm(\tilde{u}-\psi_{\varepsilon}^{\pm}))\le\tilde{C}_{0}$
and the definition of $\beta$ imply that 
\begin{equation}
\tilde{u}-\psi_{\varepsilon}^{+}\le\delta(\tilde{C}_{0}+1),\qquad\qquad\psi_{\varepsilon}^{-}-\tilde{u}\le\delta(\tilde{C}_{0}+1).\label{eq: 1 in Reg u_e}
\end{equation}
This also shows that $\tilde{u}$ is uniformly bounded independently
of $\delta$. In addition, we can choose $\tilde{C}_{0}$ large enough
so that $|f|\le\tilde{C}_{0}$. Then from the equation (\ref{eq: u-e,d})
we conclude that 
\[
-3\tilde{C}_{0}\le I\tilde{u}\le3\tilde{C}_{0}
\]
in the viscosity sense.

Thus by Theorem 4.1 of \citep{kriventsov2013c} if $B_{r}(x_{0})\subset U_{\varepsilon}$
we have 
\[
\|\tilde{u}\|_{C^{1,\alpha}(B_{r/2}(x_{0}))}\le\frac{C}{r^{1+\alpha}}(\|\tilde{u}\|_{L^{\infty}(\mathbb{R}^{n})}+3\tilde{C}_{0}r^{2s}),
\]
where $C,\alpha$ depend only on $n,s,\lambda,\Lambda$ (actually,
$C,\alpha$ can be chosen uniformly for $s>s_{0}$). For simplicity
we are assuming that $r\le1$. %
(Note that by considering the scaled operator $I_{r}v(\cdot)=r^{2s}Iv(\frac{\cdot}{r})$,
which has the same ellipticity constants $\lambda,\Lambda$ as $I$,
and using the translation invariance of $I$, we have obtained the
estimate on the domain $B_{r/2}(x_{0})$ instead of $B_{1/2}(0)$.)%
{} Then we can cover an open subset $V\subset\subset U_{\varepsilon}$
with finitely many open balls contained in $U_{\varepsilon}$ and
obtain 
\[
\|\tilde{u}\|_{C^{1,\alpha}(\overline{V})}\le C(\|\tilde{u}\|_{L^{\infty}(\mathbb{R}^{n})}+3\tilde{C}_{0}),
\]
where $C$ depends only on $n,s,\lambda,\Lambda$, and $d(V,\partial U_{\varepsilon})$.
Therefore $\tilde{u}=\tilde{u}_{\delta}$ is bounded in $C^{1,\alpha}(\overline{V})$
independently of $\delta$, due to the uniform boundedness of $\tilde{u}$
in $L^{\infty}$.

Now, we choose a decreasing sequence $\delta_{j}\to0$, and let 
\[
V_{j}:=\{x\in U_{\varepsilon}:d(x,\partial U_{\varepsilon})>\delta_{j}\}.
\]
For convenience we also set $\tilde{u}_{j}:=\tilde{u}_{\delta_{j}}$.
Consider the sequence $\tilde{u}_{j}|_{V_{1}}$. Then $\|\tilde{u}_{j}\|_{C^{1,\alpha}(\overline{V}_{1})}$
is bounded independently of $j$. Hence there is a subsequence of
$\tilde{u}_{j}$'s, which we denote by $\tilde{u}_{j_{1}}$, that
is convergent in $C^{1}$ norm to a function $u_{\varepsilon,1}$
in $C^{1,\alpha}(\overline{V}_{1})$. Now we can repeat this process
with $\tilde{u}_{j_{1}}|_{V_{2}}$ and get a function $u_{\varepsilon,2}$
in $C^{1,\alpha}(\overline{V}_{2})$, which agrees with $u_{\varepsilon,1}$
on $V_{1}$. Continuing this way with subsequences $\tilde{u}_{j_{l}}$
for each positive integer $l$, we can finally construct a function
$u_{\varepsilon}$ in $C_{\mathrm{loc}}^{1,\alpha}(U_{\varepsilon})$.
Note that the diagonal sequence $\tilde{u}_{l_{l}}$, which we denote
by $\tilde{u}_{l}$, converges pointwise to $u_{\varepsilon}$ on
$U_{\varepsilon}$, and converges uniformly to $u_{\varepsilon}$
on compact subsets of $U_{\varepsilon}$. Also note that if we let
$\delta_{l}\to0$ in (\ref{eq: 1 in Reg u_e}) we get $\psi_{\varepsilon}^{-}\leq u_{\varepsilon}\leq\psi_{\varepsilon}^{+}$.
Consequently, as we approach $\partial U_{\varepsilon}$, $u_{\varepsilon}$
converges to $\varphi_{\varepsilon}$. We extend $u_{\varepsilon}$
to all of $\R^{n}$ by setting it equal to $\varphi_{\varepsilon}$
on $\R^{n}-U_{\varepsilon}$. Note that $u_{\varepsilon}$ is a continuous
function.

Let us show that $\tilde{u}_{l}$ converges uniformly to $u_{\varepsilon}$
on $\R^{n}$. Suppose $\epsilon$ is given and we want to show that
$\sup_{\R^{n}}|\tilde{u}_{l}-u_{\varepsilon}|<\epsilon$ for large
enough $l$. We know that $\tilde{u}_{l}=\varphi_{\varepsilon}=u_{\varepsilon}$
on $\R^{n}-U_{\varepsilon}$. And by (\ref{eq: 1 in Reg u_e}), the
fact that $\psi_{\varepsilon}^{-}\leq u_{\varepsilon}\leq\psi_{\varepsilon}^{+}$,
and (\ref{eq: psi- < psi+}) we have 
\[
|\tilde{u}_{l}-u_{\varepsilon}|\le\psi_{\varepsilon}^{+}-\psi_{\varepsilon}^{-}+\delta_{l}(\tilde{C}_{0}+1)\le2C_{1}d(\cdot)+\delta_{l}(\tilde{C}_{0}+1).
\]
Now let $V\subset\subset U$ be such that for $x\in U-V$ we have
$2C_{1}d(x)<\epsilon/2$. Then if $l$ is large enough so that $\sup_{V}|\tilde{u}_{l}-u_{\varepsilon}|<\epsilon$
and $\delta_{l}(\tilde{C}_{0}+1)<\epsilon/2$, we get the desired.

Finally, let us show that $u_{\varepsilon}$ satisfies the double
obstacle problem (\ref{eq: dbl obs u_e}). Suppose $\phi$ is a bounded
$C^{2}$ function and $u_{\varepsilon}-\phi$ has a maximum over $\R^{n}$
at $x_{0}\in U$. Let us first consider the case where $u_{\varepsilon}-\phi$
has a strict maximum at $x_{0}$. We must show that at $x_{0}$ we
have 
\begin{equation}
\max\{\min\{-I\phi(x_{0})-f,u_{\varepsilon}-\psi_{\varepsilon}^{-}\},u_{\varepsilon}-\psi_{\varepsilon}^{+}\}\le0.\label{eq: 2 in Reg u_e}
\end{equation}
Now we know that $\tilde{u}_{l}-\phi$ takes its global maximum at
a point $x_{l}$ where $x_{l}\to x_{0}$; because $\tilde{u}_{l}$
uniformly converges to $u_{\varepsilon}$ on $\R^{n}$.

We also know that $\psi_{\varepsilon}^{-}\leq u_{\varepsilon}\leq\psi_{\varepsilon}^{+}$.
If $\psi_{\varepsilon}^{-}(x_{0})=u_{\varepsilon}(x_{0})$ then (\ref{eq: 2 in Reg u_e})
holds trivially. So suppose $\psi_{\varepsilon}^{-}(x_{0})<u_{\varepsilon}(x_{0})$.
Then for large $l$ we have $\psi_{\varepsilon}^{-}(x_{l})<\tilde{u}_{l}(x_{l})$.
On the other hand, since $\tilde{u}_{l}$ is a viscosity solution
of the equation (\ref{eq: u-e,d}), at $x_{l}$ we have 
\[
-I\phi(x_{l})-f-\beta_{\delta_{l}}(\psi_{\varepsilon}^{-}-\tilde{u}_{l})+\beta_{\delta_{l}}(\tilde{u}_{l}-\psi_{\varepsilon}^{+})\le0.
\]
But $\beta_{\delta_{l}}(\psi_{\varepsilon}^{-}-\tilde{u}_{l})=0$
at $x_{l}$, so 
\[
-I\phi(x_{l})-f\le-\beta_{\delta_{l}}(\tilde{u}_{l}-\psi_{\varepsilon}^{+})\le0.
\]
Thus by letting $l\to\infty$ and using the continuity of $I\phi$
and $f$ we see that (\ref{eq: 2 in Reg u_e}) holds in this case
too. 

Now if the maximum of $u_{\varepsilon}-\phi$ at $x_{0}$ is not strict,
we can approximate $\phi$ with $\phi_{\epsilon}=\phi+\epsilon\tilde{\phi}$,
where $\tilde{\phi}$ is a bounded $C^{2}$ functions which vanishes
at $x_{0}$ and is positive elsewhere. Then, as we have shown, when
$\psi_{\varepsilon}^{-}(x_{0})<u_{\varepsilon}(x_{0})$ we have $-I\phi_{\epsilon}(x_{0})\le f(x_{0})$.
Hence by the ellipticity of $I$ we get 
\[
-I\phi(x_{0})\le M^{+}(\epsilon\tilde{\phi})(x_{0})-I\phi_{\epsilon}(x_{0})\le\epsilon M^{+}\tilde{\phi}(x_{0})+f(x_{0})\underset{\epsilon\to0}{\longrightarrow}f(x_{0}),
\]
as desired. Similarly, we can show that when $u_{\varepsilon}-\phi$
has a global minimum at $x_{0}\in U$ we have 
\[
\max\{\min\{-I\phi(x_{0})-f,u_{\varepsilon}-\psi_{\varepsilon}^{-}\},u_{\varepsilon}-\psi_{\varepsilon}^{+}\}\ge0.
\]
Therefore $u_{\varepsilon}$ is a viscosity solution of equation (\ref{eq: dbl obs u_e})
as desired.
\end{proof}
Finally we have
\begin{proof}[\textbf{Proof of Theorem \ref{thm: Reg u}}]
Let $u_{\varepsilon}$ be as in Lemma \ref{lem: Reg u_e}. Let us
first show that for every bounded open set $V\subset\subset U$ and
every small enough $\varepsilon$ we have 
\begin{equation}
-C_{V}-\|f\|_{L^{\infty}(U)}\le Iu_{\varepsilon}\le\|f\|_{L^{\infty}(U)}+C_{V}\label{eq: I(u_e) bdd}
\end{equation}
in the viscosity sense on $V$, with the constant $C_{V}$ given in
Lemma \ref{lem: I psi_e bdd}.

Suppose $\phi$ is a bounded $C^{2}$ function and $u_{\varepsilon}-\phi$
has a maximum over $\R^{n}$ at $x_{0}\in V$. We must show that 
\[
-I\phi(x_{0})\le\|f\|_{L^{\infty}(U)}+C_{V}.
\]
We can assume that $u_{\varepsilon}(x_{0})-\phi(x_{0})=0$ without
loss of generality, since we can consider $\phi+c$ instead of $\phi$
without changing $I$ (because $M^{\pm}(c)=0$). So we can assume
that $u_{\varepsilon}-\phi\le0$, or $u_{\varepsilon}\le\phi$. We
also know that at $x_{0}$ we have 
\[
\max\{\min\{-I\phi(x_{0})-f,u_{\varepsilon}-\psi_{\varepsilon}^{-}\},u_{\varepsilon}-\psi_{\varepsilon}^{+}\}\le0,
\]
since $u_{\varepsilon}$ is a viscosity solution of (\ref{eq: dbl obs u_e}).
In addition remember that $\psi_{\varepsilon}^{-}\leq u_{\varepsilon}\leq\psi_{\varepsilon}^{+}$.
Now if $\psi_{\varepsilon}^{-}(x_{0})<u_{\varepsilon}(x_{0})\le\psi_{\varepsilon}^{+}(x_{0})$
then we must have $-I\phi(x_{0})\le f(x_{0})\le\|f\|_{L^{\infty}(U)}$.
And if $u_{\varepsilon}(x_{0})=\psi_{\varepsilon}^{-}(x_{0})$ then
$\phi$ is also touching $\psi_{\varepsilon}^{-}$ from above at $x_{0}$,
since $\psi_{\varepsilon}^{-}\le u_{\varepsilon}\le\phi$. But by
Lemma \ref{lem: I psi_e bdd} we know that $-I\psi_{\varepsilon}^{-}\le C_{V}$
on $V$. So we must have 
\[
-I\phi(x_{0})\le-I\psi_{\varepsilon}^{-}(x_{0})+M^{+}(\psi_{\varepsilon}^{-}-\phi)(x_{0})\le C_{V},
\]
since it is easy to see that $M^{+}(\psi_{\varepsilon}^{-}-\phi)(x_{0})\le0$
as $L(\psi_{\varepsilon}^{-}-\phi)(x_{0})\le0$ for any linear operator
$L$. Thus in either case we must have $-I\phi(x_{0})\le\|f\|_{L^{\infty}(U)}+C_{V}$,
and therefore $-Iu_{\varepsilon}\le\|f\|_{L^{\infty}(U)}+C_{V}$ in
the viscosity sense. (Heuristically, note that on the contact set
$\{u_{\varepsilon}=\psi_{\varepsilon}^{+}\}$ although a priori we
do not have a lower bound for the second derivative of $\psi_{\varepsilon}^{+}$,
and hence an upper bound for $-I\psi_{\varepsilon}^{+}$, we can obtain
the desired bound for $-Iu_{\varepsilon}$ from the equation.) The
lower bound for $-Iu_{\varepsilon}$ can be shown to hold similarly.

Thus by Theorem 4.1 of \citep{kriventsov2013c}, and similarly to
the proof of Lemma \ref{lem: Reg u_e}, we can show that there is
$\alpha$ depending only on $n,\lambda,\Lambda,s_{0}$, such that
for an open subset $V\subset\subset U$ we have 
\[
\|u_{\varepsilon}\|_{C^{1,\alpha}(\overline{V})}\le C(\|u_{\varepsilon}\|_{L^{\infty}(\mathbb{R}^{n})}+\|f\|_{L^{\infty}(U)}+C_{V}),
\]
where $C$ depends only on $n,\lambda,\Lambda,s_{0}$, and $d(V,\partial U)$.
In particular $C$ does not depend on $\varepsilon$. Therefore $u_{\varepsilon}$
is bounded in $C^{1,\alpha}(\overline{V})$ independently of $\varepsilon$,
because $\|u_{\varepsilon}\|_{L^{\infty}}$ is bounded by $\|\psi_{\varepsilon}^{\pm}\|_{L^{\infty}}$,
and $\|\psi_{\varepsilon}^{\pm}\|_{L^{\infty}}$ are uniformly bounded
by $\|\psi^{\pm}\|_{L^{\infty}}$. Now we choose a decreasing sequence
$\varepsilon_{k}\to0$, and a sequence $\overline{V}_{k}\subset V_{k+1}$
such that $U=\bigcup_{k\ge1}V_{k}$. We also assume that $\varepsilon_{k}<\frac{1}{3}d(V_{k},\partial U)$.
For convenience we denote $u_{\varepsilon_{k}},\psi_{\varepsilon_{k}}^{\pm}$
by $u_{k},\psi_{k}^{\pm}$. Consider the sequence $u_{k}|_{V_{1}}$.
Then $\|u_{k}\|_{C^{1,\alpha}(\overline{V}_{1})}$ is bounded independently
of $k$. Hence there is a subsequence of $u_{k}$'s, which we denote
by $u_{k_{1}}$, that is convergent in $C^{1}$ norm to a function
in $C^{1,\alpha}(\overline{V}_{1})$. Now we can repeat this process
with $u_{k_{1}}|_{V_{2}}$ %
and get a function in $C^{1,\alpha}(\overline{V}_{2})$, which agrees
with the previous limit on $V_{1}$. Continuing this way with subsequences
$u_{k_{l}}$ for each positive integer $l$, we can finally construct
a function $u$ in $C_{\textrm{loc}}^{1,\alpha}(U)$. 

Note that the diagonal sequence $u_{l_{l}}$, which we denote by $u_{l}$,
converges pointwise to $u$ on $U$, and converges uniformly to $u$
on compact subsets of $U$. It is also obvious that $\psi^{-}\le u\le\psi^{+}$,
since $\psi_{l}^{-}\le u_{l}\le\psi_{l}^{+}$ for every $l$. Consequently,
as we approach $\partial U$, $u$ converges to $\varphi$. We extend
$u$ to all of $\R^{n}$ by setting it equal to $\varphi$ on $\R^{n}-U$.
Note that $u$ is a continuous function. Let us also show that $u_{l}$
converges uniformly to $u$ on $\R^{n}$. Suppose $\epsilon$ is given
and we want to show that $\sup_{\R^{n}}|u_{l}-u|<\epsilon$ for large
enough $l$. Since $u,u_{l}$ are between their corresponding obstacles,
by using (\ref{eq: psi- < psi+}),(\ref{eq: psi_e - psi}) we get
\begin{align*}
|u_{l}-u| & \le\max\{|\psi_{l}^{+}-\psi^{-}|,|\psi_{l}^{-}-\psi^{+}|\}\\
 & \le|\psi^{+}-\psi^{-}|+\max\{|\psi_{l}^{+}-\psi^{+}|,|\psi_{l}^{-}-\psi^{-}|\}\\
 & \le\begin{cases}
2C_{1}d(\cdot)+C_{1}\varepsilon_{l} & \textrm{in }U,\\
C_{1}\varepsilon_{l} & \textrm{in }\R^{n}-U,
\end{cases}
\end{align*}
because $|\psi^{+}-\psi^{-}|=0$ outside of $U$. Now let $V\subset\subset U$
be such that for $x\in U-V$ we have $2C_{1}d(x)<\epsilon/2$. Then
if $l$ is large enough so that $\sup_{V}|\tilde{u}_{l}-u_{\varepsilon}|<\epsilon$
and $C_{1}\varepsilon_{l}<\epsilon/2$, we get the desired.

Finally, due to the stability of viscosity solutions, $u$ must satisfy
the double obstacle problem (\ref{eq: dbl obs}). (We can also show
this similarly to the proof of Lemma \ref{lem: Reg u_e}.) 
\end{proof}
\begin{proof}[\textbf{Proof of Theorem \ref{thm: Reg u bdry}}]
We are assuming that $\psi^{\pm}$ are $C^{2}$ on a neighborhood
$W$ of $\partial U$. Let $W_{1}\subset\subset W$ be a smaller
neighborhood of $\partial U$. Let $0\le\zeta\le1$ be a $C^{\infty}$
function with compact support in $W$ which equals $1$ on $W_{1}$.
Set 
\[
\hat{\psi}_{\varepsilon}^{\pm}:=\zeta\psi^{\pm}+(1-\zeta)\psi_{\varepsilon}^{\pm}
\]
for $\varepsilon$ small enough. Note that $\hat{\psi}_{\varepsilon}^{\pm}$
are $C^{2}$ on a neighborhood of $\overline{U}$, and agree on $\partial U$.
Also, $\hat{\psi}_{\varepsilon}^{\pm}$ uniformly converge to $\psi^{\pm}$
as $\varepsilon\to0$. In fact, by (\ref{eq: psi_e - psi}) we can
easily see that $|\hat{\psi}_{\varepsilon}^{\pm}(x)-\psi^{\pm}(x)|\le C_{1}\varepsilon$.
It is obvious that $\hat{\psi}_{\varepsilon}^{-}=\psi^{-}<\psi^{+}=\hat{\psi}_{\varepsilon}^{+}$
on $W_{1}$. Let $W_{2}\subset\subset W_{1}$ be a yet smaller neighborhood
of $\partial U$. Then due to compactness, on $U-W_{2}$ we have $\psi^{+}-\psi^{-}\ge c>0$
for some $c$. Hence for $x\in U-W_{1}$ and small enough $\varepsilon$
we have 
\begin{align*}
\psi_{\varepsilon}^{+}(x)-\psi_{\varepsilon}^{-}(x)=\int_{|y|\leq\varepsilon}\eta_{\varepsilon}(y) & [\psi^{+}(x-y)-\psi^{-}(x-y)]\,dy\\
 & \qquad\qquad\ge c\int_{|y|\leq\varepsilon}\eta_{\varepsilon}(y)\,dy=c,
\end{align*}
and thus we get 
\[
\hat{\psi}_{\varepsilon}^{+}-\hat{\psi}_{\varepsilon}^{-}=\zeta(\psi^{+}-\psi^{-})+(1-\zeta)(\psi_{\varepsilon}^{+}-\psi_{\varepsilon}^{-})\ge c(\zeta+1-\zeta)=c>0.
\]
Therefore we have $\hat{\psi}_{\varepsilon}^{-}<\hat{\psi}_{\varepsilon}^{+}$
on $U$. 

Furthermore, note that by the proof of Lemma \ref{lem: I psi_e bdd}
and the remark after it, $\pm D^{2}\psi_{\varepsilon}^{\pm}$ is uniformly
bounded from above on $U-W_{1}=U\cap\{1-\zeta>0\}$. Hence 
\begin{align*}
\pm D^{2}\hat{\psi}_{\varepsilon}^{\pm} & =\pm\zeta D^{2}\psi^{\pm}\pm2D\zeta D\psi^{\pm}\pm\psi^{\pm}D^{2}\zeta\\
 & \qquad\qquad\pm(1-\zeta)D^{2}\psi_{\varepsilon}^{\pm}\mp2D\zeta D\psi_{\varepsilon}^{\pm}\mp\psi_{\varepsilon}^{\pm}D^{2}\zeta
\end{align*}
is uniformly bounded from above on $U\cup W_{1}\supset\overline{U}$.
Because $D^{2}\psi^{\pm}$ is bounded on the support of $\zeta$,
and $D\psi_{\varepsilon}^{\pm}$ is uniformly bounded due to (\ref{eq: bd D psi _e}).
So similarly to the proof of Lemma \ref{lem: I psi_e bdd} and employing
the remark after it, we can easily show that $\pm I\hat{\psi}_{\varepsilon}^{\pm}\le C_{U}$
on $U$, for some constant $C_{U}$ independent of $\varepsilon$.

Now we can repeat the construction of $u_{\varepsilon}$ with $\hat{\psi}_{\varepsilon}^{\pm}$
instead of $\psi_{\varepsilon}^{\pm}$. Note that in this case we
have $U_{\varepsilon}=U$ for every $\varepsilon$. So we can find
viscosity solutions $u_{\varepsilon}\in C_{\mathrm{loc}}^{1,\alpha}(U)$
of 
\begin{equation}
\begin{cases}
\max\{\min\{-Iu_{\varepsilon}-f,u_{\varepsilon}-\hat{\psi}_{\varepsilon}^{-}\},u_{\varepsilon}-\hat{\psi}_{\varepsilon}^{+}\}=0 & \textrm{in }U,\\
u_{\varepsilon}=\varphi & \textrm{in }\mathbb{R}^{n}-U.
\end{cases}\label{eq: dbl obs u^}
\end{equation}
Then similarly to the proof of Theorem \ref{thm: Reg u} we can show
that for every small enough $\varepsilon$ we have 
\begin{equation}
-C_{U}-\|f\|_{L^{\infty}(U)}\le Iu_{\varepsilon}\le\|f\|_{L^{\infty}(U)}+C_{U}\label{eq: I(u_e) bdd^}
\end{equation}
in the viscosity sense on $U$. Then, again similarly to the proof
of Theorem \ref{thm: Reg u}, we can construct a function $u$ in
$C_{\textrm{loc}}^{1,\alpha}(U)$ satisfying $\psi^{-}\le u\le\psi^{+}$,
which we extend to all of $\R^{n}$ by setting it equal to $\varphi$
on $\R^{n}-U$. In addition, due to the stability of viscosity solutions,
$u$ satisfies the double obstacle problem (\ref{eq: dbl obs}). 

Now let $x_{0}\in\partial U$ and suppose $B_{r}(x_{0})\subset\subset W_{1}$.
Also set $v_{\varepsilon}:=u_{\varepsilon}-\varphi$. Let us show
that 
\begin{equation}
\begin{cases}
M_{*}^{+}v_{\varepsilon}\ge-\|f\|_{L^{\infty}(U)}-C_{0} & \textrm{in }B_{r}(x_{0})\cap U\\
M_{*}^{-}v_{\varepsilon}\le\|f\|_{L^{\infty}(U)}+C_{0} & \textrm{in }B_{r}(x_{0})\cap U
\end{cases}\label{eq: Mv < >}
\end{equation}
in the viscosity sense, for some constant $C_{0}$ independent of
$\varepsilon$. Suppose $\phi$ is a bounded $C^{2}$ function and
$v_{\varepsilon}-\phi$ has a maximum over $\R^{n}$ at $x\in B_{r}(x_{0})\cap U$.
We must show that 
\[
-M_{*}^{+}\phi(x)\le\|f\|_{L^{\infty}(U)}+C_{0}.
\]
Now note that $u_{\varepsilon}-(\varphi+\phi)$ has a maximum over
$\R^{n}$ at $x\in B_{r}(x_{0})\cap U$, and $\phi+\varphi$ is a
bounded $C^{2}$ function. Hence by (\ref{eq: I(u_e) bdd^}) we must
have 
\[
-I(\phi+\varphi)(x)\le\|f\|_{L^{\infty}(U)}+C_{U}.
\]
Then by the subadditivity of $M_{*}^{+}$ and ellipticity of $I$
with respect to $\mathcal{L}_{*}$, at $x$ we have 
\[
-M_{*}^{+}\phi-M_{*}^{+}\varphi\le-M_{*}^{+}(\phi+\varphi)\le-I(\phi+\varphi)\le\|f\|_{L^{\infty}(U)}+C_{U}.
\]
However, since $\varphi$ is $C^{2}$, similarly to the proof of
Lemma \ref{lem: I psi_e bdd} we can show that $M_{*}^{+}\varphi\le M^{+}\varphi\le C$
on $B_{r}(x_{0})$. Hence we get the desired bound for $-M_{*}^{+}\phi(x)$.
We can similarly prove the other inequality in (\ref{eq: Mv < >}). 

Next note that in addition to (\ref{eq: Mv < >}), $v_{\varepsilon}=0$
in $B_{r}(x_{0})-U$. Thus by Theorem 1.5 of \citep{ros2017boundary}
(also see \citep{ros2016boundary}) there is $\tilde{\alpha}>0$ depending
only on $n,\lambda,\Lambda,s_{0}$ so that 
\[
\|v_{\varepsilon}/d^{s}\|_{C^{\tilde{\alpha}}(B_{r/2}(x_{0})\cap\overline{U})}\le\tilde{C}(\|u\|_{L^{\infty}(\mathbb{R}^{n})}+\|f\|_{L^{\infty}(U)}+C_{0}),
\]
where $\tilde{C}$ depends only on $n,\lambda,\Lambda,s,r,U$. (Note
that we have rescaled the functions in order to obtain the estimate
on the domain $B_{r/2}(x_{0})$ instead of $B_{1/2}(0)$.) Now remember
that $u$ is the pointwise limit of a sequence $u_{\varepsilon_{l}}$
on $U$ (the diagonal sequence constructed in the proof of Theorem
\ref{thm: Reg u}). Considering this sequence in the above estimate,
it follows that there is a subsequence corresponding to $\varepsilon_{j}\to0$
such that $v_{\varepsilon_{j}}/d^{s}$ uniformly converges to a function
in $C^{\tilde{\alpha}}(B_{r/2}(x_{0})\cap\overline{U})$. But the
limit must be equal to $(u-\varphi)/d^{s}$. So $(u-\varphi)/d^{s}$
is $C^{\tilde{\alpha}}$ up to $\partial U$, and satisfies the desired
estimate.
\end{proof}
\begin{acknowledgement*}
This research was in part supported by Iran National Science Foundation
Grant No 97012372.
\end{acknowledgement*}
\bibliographystyle{plainnat}
\bibliography{/Volumes/A/Dropbox/Bibliography-Jan-2021}

\def\cprime{$'$} \def\cprime{$'$} \def\cprime{$'$} \def\cprime{$'$}
  \def\cprime{$'$} \def\cprime{$'$} \def\cprime{$'$} \def\cprime{$'$}
\begin{thebibliography}{25}
\providecommand{\natexlab}[1]{#1}
\providecommand{\url}[1]{\texttt{#1}}
\expandafter\ifx\csname urlstyle\endcsname\relax
  \providecommand{\doi}[1]{doi: #1}\else
  \providecommand{\doi}{doi: \begingroup \urlstyle{rm}\Url}\fi

\bibitem[Andersson et~al.(2012)Andersson, Shahgholian, and Weiss]{MR2989443}
J.~Andersson, H.~Shahgholian, and G.~S. Weiss.
\newblock Double obstacle problems with obstacles given by non-{$C\sp 2$}
  {H}amilton-{J}acobi equations.
\newblock \emph{Arch. Ration. Mech. Anal.}, 206\penalty0 (3):\penalty0
  779--819, 2012.

\bibitem[Barles and Soner(1998)]{barles1998option}
G.~Barles and H.~M. Soner.
\newblock Option pricing with transaction costs and a nonlinear black-scholes
  equation.
\newblock \emph{Finance Stochast.}, 2\penalty0 (4):\penalty0 369--397, 1998.

\bibitem[Bjorland et~al.(2012)Bjorland, Caffarelli, and
  Figalli]{bjorland2012nonlocal}
C.~Bjorland, L.~A. Caffarelli, and A.~Figalli.
\newblock Nonlocal tug-of-war and the infinity fractional laplacian.
\newblock \emph{Commun. Pure Appl. Math.}, 65\penalty0 (3):\penalty0 337--380,
  2012.

\bibitem[Caffarelli and Silvestre(2009)]{caffarelli2009regularity}
L.~Caffarelli and L.~Silvestre.
\newblock Regularity theory for fully nonlinear integro-differential equations.
\newblock \emph{Commun. Pure Appl. Math.}, 62\penalty0 (5):\penalty0 597--638,
  2009.

\bibitem[Caffarelli and Silvestre(2011{\natexlab{a}})]{caffarelli2011evans}
L.~Caffarelli and L.~Silvestre.
\newblock The {E}vans-{K}rylov theorem for nonlocal fully nonlinear equations.
\newblock \emph{Ann. Math.}, pages 1163--1187, 2011{\natexlab{a}}.

\bibitem[Caffarelli and
  Silvestre(2011{\natexlab{b}})]{caffarelli2011regularity}
L.~Caffarelli and L.~Silvestre.
\newblock Regularity results for nonlocal equations by approximation.
\newblock \emph{Arch. Ration. Mech. Anal.}, 200\penalty0 (1):\penalty0 59--88,
  2011{\natexlab{b}}.

\bibitem[Caffarelli et~al.(2008)Caffarelli, Salsa, and
  Silvestre]{caffarelli2008regularity}
L.~Caffarelli, S.~Salsa, and L.~Silvestre.
\newblock Regularity estimates for the solution and the free boundary of the
  obstacle problem for the fractional laplacian.
\newblock \emph{Invent. Math.}, 171\penalty0 (2):\penalty0 425--461, 2008.

\bibitem[Caffarelli et~al.(2017)Caffarelli, Ros-Oton, and
  Serra]{caffarelli2017obstacle}
L.~Caffarelli, X.~Ros-Oton, and J.~Serra.
\newblock Obstacle problems for integro-differential operators: regularity of
  solutions and free boundaries.
\newblock \emph{Invent. Math.}, 208\penalty0 (3):\penalty0 1155--1211, 2017.

\bibitem[De~Silva and Savin(2010)]{MR2605868}
D.~De~Silva and O.~Savin.
\newblock Minimizers of convex functionals arising in random surfaces.
\newblock \emph{Duke Math. J.}, 151\penalty0 (3):\penalty0 487--532, 2010.

\bibitem[Evans(1979)]{MR529814}
L.~C. Evans.
\newblock A second-order elliptic equation with gradient constraint.
\newblock \emph{Commun. Partial. Differ. Equ.}, 4\penalty0 (5):\penalty0
  555--572, 1979.

\bibitem[Fern{\'a}ndez-Real and Ros-Oton(2018)]{fernandez2018obstacle}
X.~Fern{\'a}ndez-Real and X.~Ros-Oton.
\newblock The obstacle problem for the fractional laplacian with critical
  drift.
\newblock \emph{Math. Ann.}, 371\penalty0 (3):\penalty0 1683--1735, 2018.

\bibitem[Hynd and Mawi(2016)]{Hynd}
R.~Hynd and H.~Mawi.
\newblock On hamilton-jacobi-bellman equations with convex gradient
  constraints.
\newblock \emph{Interfaces Free Bound.}, 18\penalty0 (3):\penalty0 291--315,
  2016.

\bibitem[Korvenp{\"a}{\"a} et~al.(2016)Korvenp{\"a}{\"a}, Kuusi, and
  Palatucci]{korvenpaa2016obstacle}
J.~Korvenp{\"a}{\"a}, T.~Kuusi, and G.~Palatucci.
\newblock The obstacle problem for nonlinear integro-differential operators.
\newblock \emph{Calc. Var. Partial Differ. Equ.}, 55\penalty0 (3):\penalty0 63,
  2016.

\bibitem[Kriventsov(2013)]{kriventsov2013c}
D.~Kriventsov.
\newblock {$ C^{1,\alpha} $} interior regularity for nonlinear nonlocal
  elliptic equations with rough kernels.
\newblock \emph{Commun. Partial. Differ. Equ.}, 38\penalty0 (12):\penalty0
  2081--2106, 2013.

\bibitem[Lee et~al.(2019)Lee, Park, and Shahgholian]{lee2019regularity}
K.-A. Lee, J.~Park, and H.~Shahgholian.
\newblock The regularity theory for the double obstacle problem.
\newblock \emph{Calc. Var. Partial Differ. Equ.}, 58\penalty0 (3):\penalty0
  104, 2019.

\bibitem[Mou(2017)]{mou2017perron}
C.~Mou.
\newblock Perron's method for nonlocal fully nonlinear equations.
\newblock \emph{Anal. PDE}, 10\penalty0 (5):\penalty0 1227--1254, 2017.

\bibitem[Petrosyan and Pop(2015)]{petrosyan2015optimal}
A.~Petrosyan and C.~A. Pop.
\newblock Optimal regularity of solutions to the obstacle problem for the
  fractional laplacian with drift.
\newblock \emph{J. Funct. Anal.}, 268\penalty0 (2):\penalty0 417--472, 2015.

\bibitem[Possama{\"\i} et~al.(2015)Possama{\"\i}, Soner, and
  Touzi]{possamai2015homogenization}
D.~Possama{\"\i}, H.~M. Soner, and N.~Touzi.
\newblock Homogenization and asymptotics for small transaction costs: the
  multidimensional case.
\newblock \emph{Commun. Partial. Differ. Equ.}, 40\penalty0 (11):\penalty0
  2005--2046, 2015.

\bibitem[Rodrigues and Santos(2019)]{rodrigues2019nonlocal}
J.~F. Rodrigues and L.~Santos.
\newblock On nonlocal variational and quasi-variational inequalities with
  fractional gradient.
\newblock \emph{Appl. Math. Optim.}, 80\penalty0 (3):\penalty0 835--852, 2019.

\bibitem[Ros-Oton and Serra(2016)]{ros2016boundary}
X.~Ros-Oton and J.~Serra.
\newblock Boundary regularity for fully nonlinear integro-differential
  equations.
\newblock \emph{Duke Math. J.}, 165\penalty0 (11):\penalty0 2079--2154, 2016.

\bibitem[Ros-Oton and Serra(2017)]{ros2017boundary}
X.~Ros-Oton and J.~Serra.
\newblock Boundary regularity estimates for nonlocal elliptic equations in {$
  C^1$} and {$ C^{1,\alpha} $} domains.
\newblock \emph{Ann. Mat. Pura Appl.}, 196\penalty0 (5):\penalty0 1637--1668,
  2017.

\bibitem[Safdari(2018)]{MR1}
M.~Safdari.
\newblock The regularity of some vector-valued variational inequalities with
  gradient constraints.
\newblock \emph{Commun. Pure Appl. Anal.}, 17\penalty0 (2):\penalty0 413--428,
  2018.

\bibitem[Safdari(2021{\natexlab{a}})]{SAFDARI2021358}
M.~Safdari.
\newblock Double obstacle problems and fully nonlinear {PDE} with non-strictly
  convex gradient constraints.
\newblock \emph{J. Differ. Equ.}, 278:\penalty0 358--392, 2021{\natexlab{a}}.

\bibitem[Safdari(2021{\natexlab{b}})]{SAFDARI202176}
M.~Safdari.
\newblock Global optimal regularity for variational problems with nonsmooth
  non-strictly convex gradient constraints.
\newblock \emph{J. Differ. Equ.}, 279:\penalty0 76--135, 2021{\natexlab{b}}.

\bibitem[Silvestre(2007)]{silvestre2007regularity}
L.~Silvestre.
\newblock Regularity of the obstacle problem for a fractional power of the
  laplace operator.
\newblock \emph{Commun. Pure Appl. Math.}, 60\penalty0 (1):\penalty0 67--112,
  2007.

\end{thebibliography}

\end{document}